\newcommand{\leg}[2]{\genfrac{(}{)}{}{}{#1}{#2}}
\newtheorem{theorem}{Theorem}
\newtheorem{lemma}[theorem]{Lemma}
\newtheorem{corollary}[theorem]{Corollary}
\newtheorem{proposition}[theorem]{Proposition}
\newtheorem{definition}[theorem]{Definition}
\theoremstyle{remark}
\newtheorem*{remark}{Remark}
\numberwithin{theorem}{section} \numberwithin{equation}{section}
\newcommand{\rational}{\mathbb{Q}}
\newcommand{\Q}{\mathbb{Q}}
\newcommand{\Z}{\mathbb{Z}}
\newcommand{\N}{\mathbb{N}}
\newcommand{\Nrm}{\mathcal{N}}
\begin{document}
\title[Real Quadratic Fields and Hypergeometric Series]{Multiplicative $q$-hypergeometric series arising from real quadratic fields}

\author{Kathrin Bringmann and Ben Kane} 
\address{Mathematical Institute\\University of
Cologne\\ Weyertal 86-90 \\ 50931 Cologne \\Germany}
\email{kbringma@math.uni-koeln.de}
\address{Math Department\\
Radboud University\\
Postbus 9010\\
6500 GL, Nijmegen, Netherlands 
}
\email{bkane@science.ru.nl} 

\thanks{ The first author was partially supported by NSF grant DMS-0757907.}
\subjclass[2000] {11P81, 11E16, 05A17
}
\maketitle

\begin{abstract}
Andrews, Dyson, and Hickerson showed that 2 
$q$-hypergeometric series,   going back to Ramanujan, are   related to  real quadratic fields, which explains interesting properties of their Fourier  coefficients. There is also an interesting relation of such series to automorphic forms.
Here we construct more such examples arising from interesting combinatorial statistics.

\end{abstract}

\section{Introduction and statement of results} 
 Andrews, Dyson and Hickerson \cite{ADH}  intensively studied the function
\begin{equation} \label{sigma}
\sigma(q):=1+ \sum_{n=1}^{\infty}\frac{q^{\frac{n(n+1)}{2}} }{(-q)_n} 
= 1+q-q^2+2q^3+ \dots +4q^{45}+ \dots
\end{equation}
which 
first appeared in Ramanujan's lost notebook \cite{Ra}.
Here we define as usual 
$(a)_n=(a;q)_n:=\prod_{j=0}^{n-1}\left( 1-aq^j\right)$.  
In \cite{ADH}, the authors obtain an exact formula for the coefficients of (\ref{sigma}) by 
relating  this function to the arithmetic of $\Q(\sqrt{6})$. This implies that the coefficients have 
multiplicative properties, $\sigma$ is lacunary, i.e its coefficients are almost always zero, and 
yet attains every integer infinitely many times. 
In subsequent work a few more such examples were found  
by Lovejoy, Corson et al and others, building on known Bailey pairs, which led to interesting applications 
(see for example \cite{AJO,CFLZ, Lo, Lo2}).
Additional to these properties the function $\sigma$ has a natural combinatorial interpretation as the number of partitions into distinct parts with even rank minus those with odd rank. 
Recall that Dyson's rank of a partition \cite{Dyson} is defined as its largest part minus its number of parts.

The function $\sigma$ is in several ways related to automorphic forms.
Firstly, using  Hecke $L$-series,  Cohen \cite{Co} related  this function   to classical Maass forms.  
Secondly  if instead of partitions into distinct parts one considers unrestricted partitions one obtains Ramanujan's mock theta function 
\begin{equation*}
f(q):=1+ \sum_{n=1}^{\infty}\frac{q^{n^2}}{(-q)_n^2} 
\end{equation*}
which is, due to recent work of Zwegers \cite{Zw}  and Bringmann-Ono \cite{BO,BO2}, known to be the holomorphic part  of a harmonic Maass form,
 as are all the rank generating functions.
 Harmonic  Maass forms are generalizations of  modular forms, 
in that they   satisfy    the same transformation law, and (weak) growth conditions at 
cusps, but instead of being holomorphic, they are annihilated 
by the weight $k$ hyperbolic Laplacian.  
To  describe a third  way in which $\sigma$ is related to automorphic forms, 
we recall from \cite{ADH} that  the  main step in relating $\sigma$ to $\Q(\sqrt{6})$
 is to prove the following representation  of $\sigma$ as Hecke-type sums
$$
\sigma(q)= \sum_{\substack{ n \geq 0 \\ |j| \leq n} } (-1)^{n+j}\, q^{\frac{n(3n+1)}{2} -j^2}
\left(1-q^{2n+1} \right).
$$
We note that this representation can be also viewed as a ``false mock theta function". 
To be more precise,  
using work of Zwegers \cite{Zw},
the function 
$$
\sum_{\substack{ n \geq 0 \\ |j| \leq n} } (-1)^{n+j}\, q^{\frac{n(3n+1)}{2} -j^2}
\left(1+q^{2n+1} \right)
$$ 
can be viewed as the ``holomorphic part'' of an indefinite theta series and is related to one of Ramanujan' s 
sixth order mock theta functions.  

In this paper we find more examples resembling (\ref{sigma}) which have interesting combinatorial interpretations and which we relate to the real quadratic fields $\Q(\sqrt{2})$ and $\Q(\sqrt{3})$. 
Of particular interest are Theorems \ref{16Theorem} and \ref{16Companion}, since for its proof we find two new Bailey pairs (see Theorem \ref{BaileyTheorem}) which are   of  independent interest.

We first consider $q$-hypergeometric series related to  $K=\Q(\sqrt{2})$ and denote as usual with $\Nrm(\textbf{a})$ the norm of an ideal $\textbf{a}$ in $O_K$, the ring of integers of $K$.
\begin{theorem} \label{16Theorem}
The function
$$
f_1(q) :=\sum_{n=0}^{\infty} \frac{q^{ \frac{n^2+n}{2 }} (q)_{2n}}{(-q)_n(q)_{2n+1}} = 1+2q +3q^3+q^5+2q^6+2q^7+4q^{10} + \dots + 6q^{52} + \dots 
$$ 
 satisfies
\begin{equation} \label{2mod16eqn}
q  f_1(q^{16})=  
\sum_{   
\substack{
\textbf{a}\,  \subset O_{K} \\ \Nrm(\textbf{a}) \equiv  1 \pmod{16}
}} 
q^{\Nrm(\textbf{a})}.
\end{equation}
\end{theorem}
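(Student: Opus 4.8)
The plan is to follow the Andrews--Dyson--Hickerson template for $\sigma$: first rewrite $f_1$ as a Hecke-type indefinite double sum, and then unfold that double sum into a sum over ideals of $K=\Q(\sqrt2)$ via the arithmetic of the indefinite form $x^2-2y^2$.

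\textbf{Step 1 (Hecke-type representation).} Note that $\tfrac{n(n+1)}{2}=\binom{n+1}{2}$, so $f_1(q)=\sum_{n\ge0}q^{\binom{n+1}{2}}(q)_{2n}\,\beta_n$ with $\beta_n:=\frac1{(-q)_n(q)_{2n+1}}$. The pair $(\alpha_n,\beta_n)$, with $\alpha_n$ an explicit theta-type coefficient, is one of the two new Bailey pairs of Theorem \ref{BaileyTheorem}. Feeding it into the limiting form of Bailey's lemma whose $\beta$-side is exactly $\sum_n q^{\binom{n+1}{2}}(q)_{2n}\beta_n$ turns the identity into $f_1(q)=\sum_{n\ge0}q^{\binom{n+1}{2}}\alpha_n$, possibly up to an infinite product that cancels. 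Since $\alpha_n$ is supported on an arithmetic progression and is a signed power $q^{(\text{quadratic in }n)}$ there, the right-hand side collapses — after a Jacobi-triple-product manipulation if an infinite product must be reabsorbed — into a double sum $\sum_{(r,s)\in\mathcal R}\varepsilon(r,s)\,q^{Q(r,s)}$, where $\mathcal R\subset\Z^2$ is a wedge, $Q$ is an indefinite binary quadratic form of discriminant $8$ times a square, and $\varepsilon(r,s)=\pm1$. Granting Theorem \ref{BaileyTheorem}, this step is essentially mechanical.

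\textbf{Step 2 (unfolding to ideals).} Replacing $q\mapsto q^{16}$ and multiplying by $q$, complete the square to write $16\,Q(r,s)+1=x^2-2y^2$ for explicit integral linear forms $x=x(r,s)$, $y=y(r,s)$; the parities and residues of $x,y$ forced by this substitution are precisely what cuts the arithmetic side down to $\Nrm\equiv1\pmod{16}$, and one checks that $x^2-2y^2>0$ on the relevant lattice. Since $K$ has class number one and $O_K=\Z[\sqrt2]$ contains the unit $1+\sqrt2$ of norm $-1$, every ideal $\textbf{a}$ with $\Nrm(\textbf{a})=N$ corresponds to a solution of $x^2-2y^2=N$ modulo the action of $\{\pm1\}\times\langle(1+\sqrt2)^2\rangle$, and this action has an explicit wedge as a fundamental domain. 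The crux is to show that $\mathcal R$ together with the signs $\varepsilon(r,s)$ from Step 1 matches this fundamental domain, up to the congruence conditions, so that summing $q^{x^2-2y^2}$ over it and then over residues recovers $\sum_{\Nrm(\textbf{a})\equiv1(16)}q^{\Nrm(\textbf{a})}$. Equivalently, since $\sum_{\textbf{a}}q^{\Nrm(\textbf{a})}=\sum_{m\ge1}\bigl(\sum_{d\mid m}\chi_8(d)\bigr)q^m$ with $\chi_8=\leg{8}{\cdot}$ the Kronecker symbol modulo $8$, it suffices to verify that the $k$-th coefficient of $f_1(q)$ equals $\sum_{d\mid16k+1}\chi_8(d)$ — a twisted divisor sum that the indefinite-theta manipulation produces directly, and which is consistent with the listed low-order coefficients of $f_1$.

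\textbf{Main obstacle.} The genuine work lies in Step 2: tracking the wedge, the sign $\varepsilon(r,s)$, and the congruences mod $8$ and mod $16$ (the latter reflecting the ramification of $2$ in $K$) precisely enough to see that the orbit structure under $\langle(1+\sqrt2)^2\rangle$ converts the signed count of lattice points of a given value into the unsigned ideal count, while correctly handling the walls of the wedge and any lattice points fixed by a unit. On the $q$-series side the only real choice is the specialization of Bailey's lemma: it must be chosen so that no stray infinite product survives and so that the $\alpha$-side of the Bailey pair sums to a genuine Hecke-type double series; given Theorem \ref{BaileyTheorem}, Step 1 is routine.
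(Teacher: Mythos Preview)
Your high-level strategy is the same as the paper's: obtain a Hecke-type indefinite double sum via a Bailey pair, then match it against a fundamental domain for the unit action on $O_K$. The arithmetic half (your Step 2) is essentially what the paper does, invoking Lemma 3 of \cite{ADH} to write each ideal uniquely as $(u+v\sqrt{2})$ with $u>0$, $-\tfrac{u}{2}<v\le\tfrac{u}{2}$, and then matching residue classes.

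However, your Step 1 has a real gap. The sequence $\beta_n=\dfrac{1}{(-q)_n(q)_{2n+1}}$ that you single out is \emph{not} one of the Bailey pairs of Theorem \ref{BaileyTheorem}; the relevant pair there (relative to $a=q$) has $\beta_n=\dfrac{(-1)^n(q;q^2)_n}{(q)_{2n+1}}$. Nor is there a specialization of Bailey's Lemma whose $\beta$-side carries a factor $(q)_{2n}$; the lemma only produces $(\rho_1,\rho_2)_n\bigl(\tfrac{aq}{\rho_1\rho_2}\bigr)^n$. The missing observation is the elementary rewrite
\[
\frac{(q)_{2n}}{(-q)_n}=(q)_n\,(q;q^2)_n,
\]
so that $f_1(q)=\sum_{n\ge0}\dfrac{q^{\binom{n+1}{2}}(q)_n(q;q^2)_n}{(q)_{2n+1}}$. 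Now taking $a=q$, $\rho_1\to\infty$, $\rho_2=q$ in Lemma \ref{BaileyLemma} yields exactly this $\beta$-side (the $(-1)^n$ from the limit cancels the $(-1)^n$ in $\beta_n$), with no surviving infinite product.

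Two further misconceptions: the $\alpha_n$ in Theorem \ref{BaileyTheorem} are not single signed powers of $q$ but already contain inner sums of the shape $\sum_j q^{-2j^2}$ or $\sum_j q^{-2j^2-2j}$, so the Hecke-type double sum falls out immediately with no Jacobi triple product needed; and the resulting double sum has \emph{no signs} $\varepsilon(r,s)$ at all --- after completing the square in $q\mapsto q^{16}$ one obtains four positive pieces of the form $q^{(8n+c)^2-2(4j+d)^2}$, which simply enumerate the four residue classes of $(u,v)$ giving $u^2-2v^2\equiv 1\pmod{16}$. Your anticipated sign-cancellation against the unit action never arises for $f_1$.
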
 
\noindent
Let us next describe how the function $f_1(q)$ naturally arises in the theory of partitions.  For this,
 we let $P_1$ be the set of partitions with the following properties:  One is the only part which may be repeated, and if it is repeated, say occurring $r+1$ times, then the second smallest part (if it exists) is at least $2r$ and the third smallest part (if it exists) is at least two more than the second smallest part.  In Subsection \ref{combinatorics1}, we show that $f_1(q)-q$ is the generating function for partitions $\lambda \in P_1$ with $r_1(\lambda)$ even minus those with $r_1(\lambda)$ odd, where $r_1(\lambda)$ counts the largest non-repeated part minus the number of non-repeated parts if there is not exactly one non-repeated part, and $r_1(\lambda)$ equals the largest non-repeated part otherwise.

\begin{theorem} \label{16Companion}
The function 
$$
f_2(q) :=\sum_{n=1}^{\infty} \frac{q^{ \frac{n^2+n}{2}} (q)_{2n-2}}{(-q)_{n-1} (q)_{2n-1}} = q+q^2 + 2q^3+2q^5+2q^6+\dots + 4q^{14} + \dots + 3q^{77}+\dots
$$
satisfies
\begin{equation} \label{2mod16comp}
q^{-7}  f_2(q^{16})=  
\sum_{
\substack{
\textbf{a}\,  \subset O_K \\ \Nrm(\textbf{a}) \equiv 9 \pmod{16}
 }} q^{\Nrm(\textbf{a})}.
\end{equation}
\end{theorem}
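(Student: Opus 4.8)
The plan is to prove Theorem~\ref{16Companion} in two stages, following the template of Andrews--Dyson--Hickerson \cite{ADH} for $\sigma$ and the parallel argument behind Theorem~\ref{16Theorem}: first re-express $f_2$ as a Hecke-type double sum over an indefinite binary quadratic form, and then match that sum with the ideal-counting series on the right-hand side of~(\ref{2mod16comp}).

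For the first stage I would start from the new Bailey pair attached to $f_2$ supplied by Theorem~\ref{BaileyTheorem}. Writing $f_2(q)=\sum_{n\ge 1}q^{n(n+1)/2}\big/\big((-q)_{n-1}(1-q^{2n-1})\big)$, one feeds that Bailey pair into the appropriate limiting case of Bailey's lemma, which recasts $f_2$ as $\sum_{n\ge0}(\text{elementary }q\text{-power in }n)\,\alpha_n$; since the $\alpha_n$ are themselves signed theta-type coefficients, collecting the resulting geometric and theta pieces (the factor $1-q^{2n-1}$ being exactly what produces a term $1\pm q^{\ell(n)}$) should yield a representation
$$
f_2(q)=\sum_{\substack{n\ge0\\ |j|\le n}}(-1)^{\epsilon(n,j)}\,q^{Q(n,j)}\big(1\pm q^{\ell(n)}\big)
$$
with $Q$ an indefinite quadratic form whose discriminant is $8$ up to square factors, so that after rescaling it becomes proportional to the norm form of $K=\Q(\sqrt2)$. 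This is the genuinely new input, and I expect it to be the main obstacle: producing the two Bailey pairs and checking that they collapse to a clean Hecke-type sum (presumably the reason Theorem~\ref{BaileyTheorem} is isolated as being of independent interest).

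For the second stage I would substitute $q\mapsto q^{16}$ and multiply by $q^{-7}$, so that the exponent becomes $16\,Q(n,j)-7$; completing the square and making a linear change of summation variables $(n,j)\mapsto(x,y)$ turns this into a constant times $x^{2}-2y^{2}=\Nrm(x+y\sqrt2)$, now summed over a region and a congruence class in $\Z^{2}$ forced by $|j|\le n$ together with the shift $-7$. The residue $9\pmod{16}$ in~(\ref{2mod16comp}) (versus $1\pmod{16}$ in~(\ref{2mod16eqn})) should be exactly what is cut out by $-7$ (versus $+1$). One then checks that the sign $(-1)^{\epsilon(n,j)}$ is identically $+1$ on the relevant lattice and that the region is a fundamental domain for the action of $\langle\varepsilon^{2}\rangle$ on nonzero ideals, where $\varepsilon=1+\sqrt2$ has $\Nrm(\varepsilon)=-1$. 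Since $K$ has class number one, nonzero ideals correspond to elements up to units, so the double sum counts each ideal once; it remains to verify that the congruence conditions on $(x,y)$ are equivalent to $\Nrm(\mathbf a)\equiv9\pmod{16}$, using that $2$ ramifies and an odd prime splits in $K$ iff it is $\equiv\pm1\pmod8$, whence every odd ideal norm is $\equiv\pm1\pmod8$ and the finer split mod $16$ (the residues $1$ and $9$ among those $\equiv1\pmod8$) follows from a short case analysis of $x^{2}-2y^{2}\pmod{16}$. The delicate points here are the triviality of the sign character and the exactness of the fundamental-domain description at the boundary $|j|=n$; once Theorem~\ref{16Theorem} is established, the proof of Theorem~\ref{16Companion} runs in parallel with $9$ in place of $1$.
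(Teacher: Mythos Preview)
Your outline is correct and matches the paper's approach: apply Bailey's Lemma to the Bailey pair of Theorem~\ref{BaileyTheorem} to obtain a Hecke-type double sum for $f_2$, then substitute $q\mapsto q^{16}$, complete the square, and identify the result with the ideal sum via a fundamental domain for units in $O_K$ (the paper invokes Lemma~3 of \cite{ADH} for this last step). Two small points where the paper is more specific than your sketch: it is part~(1) of Theorem~\ref{BaileyTheorem} (the pair $(a_n,b_n)$ relative to $a=1$) that is used, with the specialization $\rho_1\to\infty$, $\rho_2\to 1$ after dividing by $1-\rho_2$; and the resulting Hecke sum is not a single $\sum_{|j|\le n}$ but a pair of double sums (one over $-n\le j\le n-1$, one over $|j|\le n$), each carrying a factor $1+q^{\ell(n)}$, so that after completing the square one obtains four families of exponents $u^2-2v^2$ which together exhaust the residue class $9\pmod{16}$. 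Your anticipation that the sign is identically $+1$ is correct.
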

\noindent
The partition theoretic interpretation of $f_2(q)$ has striking similarities to that of $\sigma(q)$.  
Define $P_2$ to be the set of partitions into distinct parts restricted by the following conditions:  
 The rank  of $\lambda \in P_2$ is at least $2(s-1)$, where $s$ is the smallest part, 
 and the second smallest part is at least $2s$.  
 In Subsection \ref{combinatorics2}, we show that  $f_2(q)$
  is the generating function for $\lambda\in P_2$ with $r_2(\lambda)$ even minus those with $r_2(\lambda)$ odd, where $r_2(\lambda)$ is the rank if there is more than one part and $r_2(\lambda)=1$ otherwise.

\begin{theorem}\label{2twistthm}
The function
$$
f_3(q):=\sum_{n=0}^{\infty} 
 \frac{(q)_{2n}}{(-q)_{2n+1}} q^n = 1-2q^3 +q^4+2q^8 -2q^{11} + \dots + 3q^{24} + \dots
$$ 
satisfies 
\begin{equation}\label{2twisteqn}
qf_3(q^2)= 
\sum_{    
\textbf{a}\,  \subset O_K } 
\leg{-4}{\Nrm(\textbf{a})}
q^{\Nrm(\textbf{a})},
\end{equation}
where $\leg{c}{d}$ denotes the Kronecker symbol. 
\end{theorem}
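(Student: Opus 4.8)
\medskip
\noindent\textbf{Plan of proof.}
The strategy follows the template of \cite{ADH}: first rewrite the left-hand side of \eqref{2twisteqn} as a Hecke-type indefinite double sum, and then identify that double sum with the $\leg{-4}{\cdot}$-twisted ideal-counting function of $K=\Q(\sqrt2)$. For the first stage I would feed $f_3$ into the Bailey-lemma machinery in the same way $\sigma$ and its known relatives are treated in \cite{ADH,CFLZ,Lo,Lo2}: recognize $(q)_{2n}/(-q)_{2n+1}$ as (a scalar multiple of) the $\beta_n$ of an explicit Bailey pair --- after, if needed, separating the $(q^2;q^2)$- and $(q;q^2)$-components of the products --- and insert it into a limiting case of Bailey's lemma. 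Since the companion sequence $\alpha_n$ of such a pair is sparse and monomial, summing the remaining geometric series yields an identity of the shape
\begin{equation*}
q\,f_3(q^2)=\sum_{\substack{n\ge 0\\ |j|\le n}}(-1)^{n+j}\,q^{Q(n,j)}\bigl(1-q^{L(n)}\bigr),
\end{equation*}
with $Q$ an indefinite integral binary quadratic form whose relevant discriminant is a multiple of $8$ (reflecting the $\sqrt 2$) and $L$ linear. Equivalently, this double sum can be obtained directly from the Rogers--Fine identity together with the finite form of Gauss's identity for $(q;q)_N/(-q;q)_N$; the content of this stage is just to pin down the precise Bailey pair (or transformation) and the exact $Q$, $L$, and signs.

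For the second stage, recall that $K=\Q(\sqrt2)$ has class number one, $O_K=\Z[\sqrt2]$, and fundamental unit $\varepsilon=1+\sqrt2$ with $N(\varepsilon)=(1+\sqrt2)(1-\sqrt2)=-1$. Writing each nonzero ideal as $(\alpha)$ with $\alpha=x+y\sqrt2$, the right-hand side of \eqref{2twisteqn} becomes $\sum_\alpha\leg{-4}{|x^2-2y^2|}q^{|x^2-2y^2|}$ over a set of representatives of $(O_K\setminus\{0\})/O_K^{\times}$. A short computation shows that for $x$ odd one has $\leg{-4}{|x^2-2y^2|}=\sgn(x^2-2y^2)(-1)^y$, while the symbol vanishes for $x$ even, and that this expression is invariant under multiplication by units; hence the series equals $\sump\sgn(x^2-2y^2)(-1)^y q^{|x^2-2y^2|}$ over a fundamental domain for the action of $O_K^{\times}$ on pairs $(x,y)\in\Z^2\setminus\{0\}$ --- the action generated by $(x,y)\mapsto(x+2y,\,x+y)$ and $(x,y)\mapsto(-x,-y)$ --- intersected with $\{x\ \text{odd}\}$. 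I would then choose this fundamental domain explicitly (a region such as $\alpha>0$, $|\bar\alpha|\le|\alpha|<\varepsilon^2|\bar\alpha|$, using that $\varepsilon^2=3+2\sqrt2>1$ expands $\alpha$ and contracts $\bar\alpha$), introduce coordinates $n,j$ on it, and note that the line $x^2=2y^2$ splits it into two pieces on which $\sgn(x^2-2y^2)=+1$ and $-1$; these pieces are matched respectively with the terms $q^{Q(n,j)}$ and $-q^{Q(n,j)+L(n)}$ of the double sum from the first stage, with $(-1)^y$ matching $(-1)^{n+j}$ after the relabelling.

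The main obstacle is the matching in the second stage: choosing the fundamental domain so that the indefinite Hecke sum lands \emph{exactly} on the twisted ideal sum. This requires making all the signs agree on the nose --- $\sgn(x^2-2y^2)$, $(-1)^y$, and the $(-1)^{n+j}$ produced by the Bailey computation --- treating correctly the boundary walls of the fundamental domain (where an ideal may be recorded by a reflected pair rather than by $\{\alpha,-\alpha\}$) and the corner where the splitting line meets a wall, and verifying that even values of $|x^2-2y^2|$ (on which $\leg{-4}{\cdot}$ vanishes) are never produced by the Hecke sum, i.e.\ that the ramified prime $2$ is correctly invisible. A secondary difficulty is in the first stage: identifying a Bailey pair whose image under the chosen limiting case of Bailey's lemma is precisely $q\,f_3(q^2)$ and not a near variant, and justifying the rearrangements leading to the double sum. (Once the double sum is available, an alternative to the explicit matching is to check that both sides of \eqref{2twisteqn} are holomorphic parts of the same indefinite theta series, but the direct identification above is the more elementary route.)
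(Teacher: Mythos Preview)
Your outline is the same two-stage approach the paper takes, and it would work, but the execution in the paper is simpler than you anticipate in both stages.

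For stage one, the paper does not produce a $\sigma$-type Hecke sum with a factor $(1-q^{L(n)})$ and sign $(-1)^{n+j}$. Instead, applying a known Bailey pair of Andrews--Hickerson (Theorem~2.3 of \cite{AH}, with $q\to q^2$, $a=q^2$, $b=-q$, $c=-1$) in Bailey's Lemma with $\rho_1=q$, $\rho_2=q^2$ gives directly
\[
f_3(q)=\sum_{\substack{n\ge 0\\ |j|\le n}}(-1)^{j}\,q^{2n^2+2n-j^2},
\]
so that $q\,f_3(q^2)=\sum_{n\ge 0,\,|j|\le n}(-1)^j q^{(2n+1)^2-2j^2}$. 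There is no linear correction term and the sign depends only on $j$.

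For stage two, you correctly note that $N(\varepsilon)=-1$, but you do not exploit it. Because the fundamental unit has norm $-1$, Lemma~3 of \cite{ADH} lets one write every ideal uniquely as $(u+v\sqrt2)$ with $u>0$ and $-\tfrac{u}{2}<v\le\tfrac{u}{2}$; on this domain $u^2-2v^2>0$ automatically, so the $\sgn(x^2-2y^2)$ you introduce is identically $+1$ and the ``two pieces'' never arise. The Kronecker symbol kills even $u$, and for odd $u$ one has $\leg{-4}{u^2-2v^2}=(-1)^v$, which matches the $(-1)^j$ on the nose after $u=2n+1$, $v=j$. Thus the boundary/corner and sign-matching difficulties you flag as the main obstacle simply do not occur here; your plan overestimates the work because it is patterned on the $\Q(\sqrt6)$ case (fundamental unit of norm $+1$) rather than on $\Q(\sqrt2)$.
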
 
\noindent To relate $f_3$ to partitions, we  
recall the notion of overpartitions and overpartition pairs. 
An \textit{overpartition}   \cite{CL} of n is a partition of n in which the first occurrence of a number can be 
overlined.
An \textit{overpartition pair} \cite{Lo3} of n is a pair of overpartitions $(\mu, \lambda)$, 
where the sum of all the parts is n. 
Here we  consider
$P_3$ as  the set of  overpartitions  pairs $\Lambda=(\mu,\lambda)$  with the following restrictions on $\mu$ and $\lambda$:  If the largest part of $\mu$ is overlined then it must also occur non-overlined and if $\mu$ is the empty partition then no parts of $\lambda$ may be overlined.  Moreover, if $\lambda$ has any   parts, then its largest  part is exactly one greater than the largest  part of $\mu$, parts of $\lambda$   of size one less than its largest part cannot be overlined, and the number of occurrences of the largest  part in $\lambda$ is at least half of all parts in $\lambda$ (with strict inequality if the largest part of $\lambda$ is overlined).
In Subsection \ref{combinatorics3} we show that $f_3(q)$ is the generating function for $\Lambda\in P_3$ with $r_3(\Lambda)$ odd minus those with $r_3(\Lambda)$ even, where $r_3(\Lambda)$ counts the number of occurrences of the largest part in $\lambda$  minus the number of   parts in $\mu$.
\begin{theorem}\label{2mod1thm}
The function  
$$
f_4(q):=\sum_{n=0}^{\infty} 
 \frac{(q)_{2n+1}}{(-q)_{2n+2}} q^{n+1} = q -q^2 -q^4 +2q^7 -q^8 +q^9 -2q^{14} +\dots - 3q^{98} + \dots 
$$ 
satisfies 
\begin{equation}\label{2mod1eqn}
f_4(q)= -
\sum_{ 
\textbf{a}\,  \subset O_K } 
  (-1)^{\Nrm(\textbf{a})} 
 q^{\Nrm(\textbf{a})}.
\end{equation} 
\end{theorem}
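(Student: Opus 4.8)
The plan is to show that the two sides of \eqref{2mod1eqn} both equal the same indefinite binary theta series summed over a fundamental domain. On the arithmetic side I would use that $K=\Q(\sqrt 2)$ has class number one, with $O_K=\Z[\sqrt 2]$ and fundamental unit $1+\sqrt 2$ of norm $-1$. Hence every nonzero ideal has a totally positive generator $\alpha=x+y\sqrt 2$ (so $x>0$ and $x^2-2y^2>0$), unique up to multiplication by the totally positive fundamental unit $(1+\sqrt 2)^2$, and $\Nrm((\alpha))=x^2-2y^2$ with $(-1)^{\Nrm((\alpha))}=(-1)^{x^2-2y^2}=(-1)^x$. Choosing a fundamental domain $\mathcal D\subset\Z[\sqrt 2]$ for this action (for instance, $1\le \alpha/\bar\alpha<(1+\sqrt 2)^4$), the right-hand side becomes
\begin{equation*}
-\sum_{\textbf{a}\subset O_K}(-1)^{\Nrm(\textbf{a})}q^{\Nrm(\textbf{a})}=-\sum_{(x,y)\in\mathcal D}(-1)^{x}q^{x^2-2y^2}.
\end{equation*}
(Equivalently, splitting ideals by divisibility by the ramified prime $(\sqrt 2)$ shows this equals $g(q)-2g(q^2)=\sum_{d\ge1}\leg{8}{d}\tfrac{q^d}{1+q^d}$, where $g(q):=\sum_{\textbf{a}\subset O_K}q^{\Nrm(\textbf{a})}$; either form can be taken as the target.)

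On the $q$-series side I would rewrite $f_4(q)=\sum_{n\ge0}q^{n+1}\beta_n$ with $\beta_n=(q)_{2n+1}/(-q)_{2n+2}$ and feed this into the standard machinery — Bailey's lemma applied to a classical Bailey pair with $\beta$-sequence $(\beta_n)$, or equivalently an iteration based on the Rogers--Fine identity; no new Bailey pair is needed here, in contrast to Theorems \ref{16Theorem} and \ref{16Companion}. The $\alpha$-sequence entering this way is essentially a one-dimensional theta series, so after applying the relevant limiting case of Bailey's lemma and re-expanding the resulting Pochhammer prefactor and $\alpha$-terms by the Jacobi triple product one is left with a genuine Hecke-type double sum
\begin{equation*}
f_4(q)=\sum_{(m,j)\in S}(-1)^{m}q^{Q(m,j)},
\end{equation*}
with $Q$ an indefinite binary quadratic form and $S$ a cone-like region in $\Z^2$ — of exactly the shape of the representation of $\sigma$ recalled in the introduction.

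The remaining step, and the main obstacle, is to exhibit a linear change of variables carrying $S$ onto $\mathcal D$, the form $Q$ onto $x^2-2y^2$, and the sign $(-1)^{m}$ onto $-(-1)^{x}$. One has to set up the fundamental domain and the coordinate change so that the two indefinite forms, the two summation regions, and the two sign patterns match all at once; and, since both double sums converge only conditionally, the rearrangement has to be legitimized by grouping terms into finite, manifestly absolutely convergent blocks, exactly as in Andrews--Dyson--Hickerson. I would expect this dictionary between Hecke-type double sums and sums over ideals of real quadratic fields to be established once and applied uniformly to Theorems \ref{16Theorem}--\ref{2mod1thm}.
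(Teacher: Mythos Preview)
Your overall strategy is exactly the paper's: apply Bailey's Lemma with a known Bailey pair (indeed not the new ones of Theorem \ref{BaileyTheorem}) to produce a Hecke-type double sum, then match it to a fundamental-domain sum over ideals of $O_K$ via Lemma~3 of \cite{ADH}. A few details differ from what you sketch. First, the relevant Bailey pair is the Andrews--Hickerson pair $A_n',B_n'$ (Theorem \ref{AndrewBailey}) with $q\to q^2$, $b=-q$, $c=-q^2$, $a=q^4$; the $\beta$-sequence is $B_n'=1/(-q^3,-q^4;q^2)_n$, and the numerator $(q)_{2n+1}$ you wrote into $\beta_n$ is supplied instead by the $(\rho_1,\rho_2)_n$ factor in Bailey's Lemma with the \emph{non-limiting} specialization $\rho_1=q^2$, $\rho_2=q^3$. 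The $A_n'$ already carries an inner $j$-sum, so the Hecke form
\[
f_4(q)=-\sum_{\substack{n\ge 0\\ -n-1\le j\le n}}(-1)^{j}\,q^{2n^2+4n+2-j^2}
\]
drops out directly from the $\alpha$-side---no Jacobi triple product expansion of a prefactor is needed, and the sign is $(-1)^j$ rather than $(-1)^m$. Second, the matching is slightly indirect: one substitutes $q\to q^2$ to recognize $-f_4(q^2)$ as the sum over ideals of \emph{even} norm (writing $\mathbf{a}=(u+v\sqrt 2)$ with $\Nrm(\mathbf{a})=2v^2-u^2$, $v>0$, $-v<u\le v$, and $(-1)^u=(-1)^{\Nrm(\mathbf{a})/2}$), and then divides by the unique ideal $(\sqrt 2)$ of norm $2$ to pass to all ideals. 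Your proposed direct change of variables onto a fundamental domain for totally positive generators would work equally well once the Hecke sum is in hand.
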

\noindent
The combinatorial interpretation of $f_4$ is similar to that of $f_3$. 
We let $P_4$ be the set of  overpartition pairs $\Lambda=(\mu,\lambda)$ 
 with the following restrictions on $\mu$ and $\lambda$: If the largest part of $\mu$ is overlined then it must also occur non-overlined.  Moreover, if $\lambda$ has any parts, then its  largest part equals the largest  part of $\mu$, 
   the largest part of $\lambda$  cannot be overlined,  $\lambda$  has an even number of parts, and the number of occurrences of the largest  part in $\lambda$ is at least half of the total number of its parts.
In Subsection \ref{combinatorics4} we show that  $f_4(q)$ is the generating function for $\Lambda\in P_4$ with $r_4(\lambda)$ odd minus those with $r_4(\lambda)$ even, where $r_4(\lambda)$ is half the number of parts of $\lambda$ minus the number of parts of $\mu$.  

We next turn to the  real quadratic field $L:=\Q(\sqrt{3})$.
\begin{theorem}\label{3mod4thm}
The function 
$$
f_5(q):=\sum_{n=0}^{\infty} 
 (-1)^n \frac{(q)_n}{(q;q^2)_{n+1}} q^{\frac{n^2+n}{2}} = 1 + q^2 + 2q^3 +q^6 +2q^8  + \dots + 3q^{42} + \dots
$$ 
satisfies
\begin{equation}\label{3mod4eqn}
qf_5(q^4) = 
\sum_{   
\substack{
\textbf{a}\,  \subset O_L \\ \Nrm(\textbf{a}) \equiv  1 \pmod{4}
}}   
q^{\Nrm(\textbf{a})}.
\end{equation}
\end{theorem}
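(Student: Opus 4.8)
The plan is to follow the template by which the identity for $\sigma$ in \cite{ADH} was established: first convert $f_5$ into a Hecke-type indefinite theta series, then recognize the norm form of $L=\Q(\sqrt3)$ in the resulting quadratic exponent, and finally read off the ideal count on the right-hand side of \eqref{3mod4eqn}. Thus I would begin by transforming the one-fold $q$-hypergeometric series defining $f_5$ into a double sum of Hecke type, of the same shape as the representation of $\sigma$ recalled in the introduction, namely
\[
f_5(q)=\sum_{\substack{n\ge 0\\ |j|\le n}}(-1)^{n+j}\,q^{Q(n,j)}\bigl(1-q^{L(n,j)}\bigr)
\]
for an explicit indefinite quadratic form $Q$ and linear form $L$ (the precise cone, sign, and shifts may differ slightly from this prototype). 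This is where the Bailey machinery enters: one inserts an appropriate Bailey pair into Bailey's lemma, or iterates along the Bailey chain, and collapses the emerging double sum into the Hecke form. The telescoping factor $1-q^{L(n,j)}$ is characteristic of such identities and will later account for reducing each orbit of ideal generators under the units to a single representative.

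Next I would substitute $q\mapsto q^{4}$ and multiply by $q$, so that the exponents become $4Q(n,j)+1$ (and $4Q(n,j)+4L(n,j)+1$ for the second term). Completing the square, I expect to rewrite these in the form $u^{2}-3v^{2}$ with $u=u(n,j)$ and $v=v(n,j)$ integers running over fixed arithmetic progressions, thereby identifying the exponents with values of the norm form $N_{L/\Q}(u+v\sqrt3)=u^{2}-3v^{2}$ on $O_{L}=\Z[\sqrt3]$. The progressions should force exactly $\Nrm(\mathbf a)\equiv 1\pmod{4}$, and one checks that $u^{2}-3v^{2}>0$ on the relevant region, so that only the principal form contributes.

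Since $\Q(\sqrt3)$ has class number $1$, every ideal is $(\alpha)$ with $\alpha=u+v\sqrt3$ and $\Nrm((\alpha))=u^{2}-3v^{2}$, and the generators of a fixed ideal form one orbit under the unit group $\langle -1,\varepsilon\rangle$ with $\varepsilon=2+\sqrt3$ of norm $+1$. Fixing a fundamental domain for this action — say an inequality confining $\alpha/\bar\alpha$ to $[1,\varepsilon^{2})$, equivalently a range for $v/u$ — I would set up a bijection between the lattice points $(n,j)$ in the Hecke cone and the ideals of $O_{L}$, under which the factor $1-q^{L(n,j)}$ exactly effects the passage from the full orbit to a single representative. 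Checking finally that the sign $(-1)^{n+j}$ is constant along each orbit, so that no cancellation occurs, then yields $q\,f_5(q^{4})=\sum_{\Nrm(\mathbf a)\equiv 1\,(4)}q^{\Nrm(\mathbf a)}$, as claimed.

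I expect the main obstacle to be the first step, namely producing the Bailey pair that yields precisely the correct Hecke-type double sum; the fact that the authors needed new Bailey pairs (Theorem \ref{BaileyTheorem}) for the companion statements suggests this part is delicate. The bookkeeping in the last two steps is a further source of difficulty: because the fundamental unit has norm $+1$ (so the narrow class number is $2$), one must be careful that the congruence $\Nrm(\mathbf a)\equiv 1\pmod{4}$ isolates exactly the intended family of ideals and that all of the resulting signs come out $+1$.
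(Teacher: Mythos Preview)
Your overall template --- Bailey pair $\to$ Hecke-type double sum $\to$ norm form of $\Q(\sqrt3)$ $\to$ ideal count --- is exactly the paper's, but two of your concrete expectations are off, and one idea is missing.

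First, the Hecke representation of $f_5$ is simpler than you anticipate: it is
\[
f_5(q)=\sum_{\substack{n\ge0\\ |j|\le n}} q^{\frac{3(n^2+n)}{2}-\frac{j^2+j}{2}},
\]
with \emph{no} sign $(-1)^{n+j}$ and \emph{no} telescoping factor $1-q^{L(n,j)}$. The Bailey pair used is not new; it is the Andrews--Hickerson pair (Theorem~\ref{AndrewBailey} here, i.e.\ Theorem~2.3 of \cite{AH}) with $a=q$, $b=q^{1/2}$, $c=-q^{1/2}$, specialized in Bailey's Lemma at $\rho_1=q$, $\rho_2\to\infty$. Consequently your proposed mechanism for cutting down to one ideal representative via a $1-q^{L}$ factor does not apply: the reduction to a single generator per ideal is carried out instead by Lemma~3 of \cite{ADH}, which says precisely that the cone condition $|j|\le n$ (equivalently $-v<u\le v$) already picks out a fundamental domain for the unit action.

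Second, the substitution is not $q\mapsto q^4$ but $q\mapsto q^8$: one finds $q^{2}f_5(q^{8})=\sum q^{\,3(2n+1)^2-(2j+1)^2}$, which Lemma~3 of \cite{ADH} identifies with $\sum_{\Nrm(\mathbf a)\equiv 2\,(8)}q^{\Nrm(\mathbf a)}$ via the form $3v^2-u^2$ (both $u,v$ odd). The missing idea is then to divide by the unique ideal $(1+\sqrt3)$ of norm $2$, which converts ``$\equiv 2\pmod 8$'' into ``$\equiv 1\pmod 4$'' and yields \eqref{3mod4eqn}. This trick is exactly what resolves the narrow-class-number issue you correctly flagged: passing through norm $2$ and the ramified prime sidesteps the ambiguity between the two forms $u^2-3v^2$ and $3v^2-u^2$. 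Your direct $q\mapsto q^4$ route gives exponents $\tfrac12\bigl(3(2n+1)^2-(2j+1)^2\bigr)$, which are not values of the integral norm form without this same division by $2$, so the detour is not optional.
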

\noindent
To see how   $f_5$ can be viewed in the framework of partitions,  we require some notation.
For a partition $\lambda=(\lambda_1,\dots,\lambda_n)$ into $n$ distinct parts (in decreasing order), define the sequence 
$\ell_i:=\lambda_i-\lambda_{i+1}$ ($1\leq i \leq n-1$) and  $\ell_n:=\lambda_n$. 
   Also, define $E:=E_{\lambda}:=\{ 2 \leq  r \leq n: \ell_r\text{ is even}\}$ and $e:=\# E$.  
 Let $P_5$ be the set of partitions $\lambda$ into distinct parts with the following properties:  
 If  $\lambda$ has only   a single part, then this part is not congruent to 1 modulo 3, and otherwise $\ell_1\geq d_{\lambda}+1$ and $\ell_1\equiv d_{\lambda}+1\pmod{3}$, where $d_{\lambda}$ is defined in Subsection \ref{combinatorics5}.
In Subsection \ref{combinatorics5}, we show that   $f_5(q)$ 
 is the generating function for partitions $\lambda\in P_5$ with $r_5(\lambda)$ odd minus those with $r_5(\lambda)$ even, where $r_5(\lambda):=\lambda_2$, if the partitions contains at least two parts and  
 $r_5(\lambda):=1$ otherwise.  
 \begin{remark}
 We note that  the $q$-hypergeometric series  
 $$
\sum_{n=0}^{\infty} 
\frac{(q)_n}{(q;q^2)_{n+1}} q^{\frac{n^2+n}{2}} = 1 + 2q + q^2 +2q^3 + 2q^4 + 3q^6 + \dots +6q^{81}+\dots ,
$$
which is obtained from $f_5(q)$ by deleting the $(-1)^n$, 
 is the  modular form   
 $\frac{\eta^4(2 \tau)}{\eta^2(\tau)}$
 which is related to  
$\rational(i)$.  
\end{remark}
\begin{theorem}\label{3mod4compthm}
The function 
$$
f_6(q):=\sum_{n=1}^{\infty} 
 (-1)^n \frac{(q)_{n-1} (q^2;q^2)_{n-1}}{(q)_{2n-1}}q^n = -q-2q^3 -2q^6-q^7-2q^{10} -\dots -4q^{36}- \dots 
$$
satisfies 
\begin{equation}\label{3mod4compeqn}
q^{-1} f_6(q^4)= -
 \sum_{   
\substack{
\textbf{a}\,  \subset O_L \\ \Nrm(\textbf{a}) \equiv  3 \pmod{4}
}} 
 q^{\Nrm(\textbf{a})}.
\end{equation}
\end{theorem}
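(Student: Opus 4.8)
The plan is to prove Theorem \ref{3mod4compthm} exactly as the companion statement Theorem \ref{3mod4thm} for $f_5$, the three ingredients being a Hecke-type (indefinite theta) representation of $f_6$, a parametrization of the integral ideals of $O_L$ by lattice points, and a matching of the two resulting double sums. First I would produce the Hecke-type identity: feeding a suitable Bailey pair (here a classical one, in contrast to the two new pairs of Theorem \ref{BaileyTheorem}) into Bailey's lemma and simplifying the resulting double series, one should obtain an identity expressing $f_6(q)$ --- and hence, after $q\mapsto q^4$ and multiplication by $q^{-1}$, the left-hand side of \eqref{3mod4compeqn} --- as a sum $\sum \pm q^{Q}$ of the Andrews--Dyson--Hickerson type, where (possibly after splitting off a factor $1-q^{\bullet}$ into two partial theta pieces) $Q$ runs over values of the norm form $x^{2}-3y^{2}$ of $L=\Q(\sqrt 3)$, evaluated along a cone-shaped family of lattice points subject to a $2$-adic congruence condition.

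The second ingredient is arithmetic. The field $L=\Q(\sqrt3)$ has class number one and fundamental unit $\varepsilon=2+\sqrt3$, which has norm $+1$ (there is no unit of norm $-1$, as $x^{2}-3y^{2}=-1$ is insoluble modulo $3$). Hence every integral ideal is $(\alpha)$ with $\alpha=x+y\sqrt3\in O_L$, the generator $\alpha$ being unique up to multiplication by a unit $\pm\varepsilon^{k}$, and $\Nrm((\alpha))=|x^{2}-3y^{2}|$. Since $x^{2}-3y^{2}$ is odd exactly when one of $x,y$ is odd, and is then automatically $\equiv1\pmod{4}$, one sees that $\Nrm((\alpha))\equiv3\pmod{4}$ precisely when $x^{2}-3y^{2}$ is a \emph{negative} integer $\equiv1\pmod{4}$; thus the norm-$3\pmod{4}$ ideals correspond to the ``negative branch'' $3y^{2}>x^{2}$ of the hyperbola, while the norm-$1\pmod{4}$ ideals of Theorem \ref{3mod4thm} occupy the ``positive branch.'' Fixing an explicit fundamental domain $D$ for the $\{\pm\varepsilon^{k}\}$-action (normalize the sign of $\alpha$, then confine the ratio $\alpha/\alpha'$ of $\alpha$ to its conjugate $\alpha'$ to a half-open interval of length $\varepsilon^{2}$), the right-hand side of \eqref{3mod4compeqn} becomes
\begin{equation*}
-\sum_{\substack{(x,y)\in D\\ 0<3y^{2}-x^{2}\equiv3\pmod{4}}} q^{\,3y^{2}-x^{2}},
\end{equation*}
whose coefficient of $q^{n}$ is $-\sum_{d\mid n}\leg{12}{d}$ for $n\equiv3\pmod{4}$, i.e.\ the negative of the number of integral ideals of norm $n$.

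It then remains to identify the two indefinite sums: I would set up an explicit bijection between the index set of the Hecke-type sum for $q^{-1}f_6(q^{4})$ and the fundamental domain $D$ (together with its sign), verify that it preserves exponents, $Q\leftrightarrow 3y^{2}-x^{2}$, and check that the region and congruence conditions match --- in particular, that splitting the $1-q^{\bullet}$ factor on the series side corresponds to the two arms of the unit-orbit normalization on the ideal side, and that, after the rescaling $q\mapsto q^{4}$, the conditions defining the $f_6$-sum become precisely $0<3y^{2}-x^{2}\equiv3\pmod{4}$ together with $(x,y)\in D$.

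I expect this last matching to be the main obstacle: extracting the precise cone and sign pattern from the Bailey-pair computation and then exhibiting the bijection with $D$, keeping faithful track of (i) signs, (ii) the halving/doubling of the cone that reflects the branch structure of the unit normalization, and (iii) the $2$-adic congruences. Since the field $\Q(\sqrt3)$, its class number one, and its fundamental unit already play exactly this role in the proof of Theorem \ref{3mod4thm}, the genuinely new content here is the companion Bailey pair yielding $f_6$ in place of $f_5$, the passage from residue $1$ to residue $3$ modulo $4$, and the resulting overall sign $-1$ --- which the parallel structure should render routine once the $f_5$ case is established.
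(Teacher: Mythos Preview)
Your plan matches the paper's approach: the paper proves a Hecke-type identity for $f_6$ (Proposition~\ref{Hecke6}) by inserting a classical Bailey pair---specifically the one from Lemma~12 of Andrews~\cite{An} relative to $a=1$, with $\mathcal{B}_n=(q)_{n-1}/(q)_{2n-1}$---into Bailey's Lemma with $\rho_1=-1$ and $\rho_2\to 1$ after dividing by $1-\rho_2$, then rewrites $q^{-1}f_6(q^4)$ as $-\sum q^{3v^2-u^2}$ over the fundamental domain $v>0$, $-v<u\le v$ supplied by Lemma~3 of~\cite{ADH}, exactly your ``negative branch'' picture. One small point: in this case the $(1-q^{\bullet})$ factors you anticipate actually cancel against the $1/(1-q^n)$ arising on the $\alpha$-side, so the final Hecke sum already splits naturally into two pieces (odd~$v$, even~$u$ and even~$v$, odd~$u$) without any further factoring.
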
 
\noindent
Let $P_6$ be the set of overpartitions $\lambda$ with the following properties:  the largest part cannot be overlined, every overlined part must also occur non-overlined, and the number of repetitions of the largest part plus the number of overlined parts is greater than half of all of the parts. 
In Subsection \ref{combinatorics6}, we show that  
 $f_6(q)$ is the generating function for $\lambda\in P_6$ with $r_6(\lambda)$ even minus those with $r_6(\lambda)$ odd, where $r_6(\lambda)$ counts the largest part minus the number of overlined parts.

\begin{theorem} \label{3case}
The function
$$
f_7(q):=
\sum_{n =0}^{\infty} 
 (-1)^n\, \frac{q^{n^2+n}\left(q^2;q^2\right) _n}{(-q)_{2n+1} } = 1-q +2q^4-q^5-2q^7 +q^8 +\dots + 3q^{40} + \dots
$$
satisfies  
\begin{equation} \label{3caseeqn}
qf_7(q^3)= 
 -  \sum_{   
\substack{
\textbf{a}\,  \subset O_L\\ \Nrm(\textbf{a}) \equiv  1 \pmod{3}
}}   
(-1)^{\mathcal{N}(\textbf{a})}\, q^{\Nrm(\mathbf{a})}.
\end{equation}
\end{theorem}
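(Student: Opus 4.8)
The plan is to follow the template of Andrews--Dyson--Hickerson that already underlies the proof of the Hecke representation of $\sigma$ quoted in the introduction: first rewrite $f_7(q)$ as a Hecke-type indefinite theta series, i.e.\ a double sum over a cone in $\Z^2$ with a short linear factor, and then, after applying $q\mapsto q^3$ and multiplying by $q$, match this double sum against an explicit parametrization of the ideals of $O_L=\Z[\sqrt 3]$ by lattice points.

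First I would produce the Hecke representation. The summand $(-1)^n q^{n^2+n}(q^2;q^2)_n/(-q)_{2n+1}$ should be the specialization of a Bailey-pair identity: inserting a suitable Bailey pair (relative to $q$ or to $q^2$) into the Bailey chain, or using a limiting case of a ${}_2\phi_1$/${}_3\phi_2$ summation, I expect to obtain
\begin{equation*}
f_7(q)=\sum_{(n,j)\in R}(-1)^{n+j}\,q^{Q(n,j)}\bigl(1-q^{L(n,j)}\bigr)
\end{equation*}
for an explicit indefinite quadratic form $Q$, linear form $L$, cone $R\subset\Z^2$, and sign convention. The factor $1-q^{L}$ is the crucial ingredient: after the substitution $q\mapsto q^{3}$ it will encode the action of the fundamental unit of $L$ and allow a telescoping of the sum over a unit orbit. (One of the new Bailey pairs of Theorem~\ref{BaileyTheorem}, or a classical one from the standard lists, should do the job.)

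Next comes the arithmetic translation. Since $L=\Q(\sqrt 3)$ has class number one, fundamental unit $\ve=2+\sqrt 3$ of norm $+1$, and ring of integers $\Z[\sqrt 3]$, an ideal $\mathbf a$ with $\Nrm(\mathbf a)=N>0$ equals $(\alpha)$ with $\alpha=a+b\sqrt 3$, $a^2-3b^2=N$, and $\alpha$ determined up to $\pm\ve^{\Z}$. Fixing the fundamental domain $\alpha>0$, $1\le \alpha/\alpha'<\ve^{2}$ (with $\alpha'$ the conjugate) identifies the ideals of norm $N$ with the lattice points $(a,b)$ in an explicit wedge. Moreover $a^2-3b^2\equiv a^2\pmod 3$, so $\Nrm(\mathbf a)\equiv 1\pmod 3$ precisely when $3\nmid a$, and $(-1)^{\Nrm(\mathbf a)}=(-1)^{a+b}$. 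I would then exhibit a linear change of variables carrying the cone $R$ (after $q\mapsto q^{3}$) bijectively onto this wedge, so that $3Q(n,j)+1=a^2-3b^2$, the two monomials of $1-q^{L(n,j)}$ become the two ``ends'' of the unit orbit and telescope to cover the wedge exactly once, the parity $n+j$ corresponds to $a+b$, and the condition $3\nmid a$ holds automatically by the shape of the substitution. Summing over $N$ then gives \eqref{3caseeqn}, with the overall sign $-1$ coming out of the comparison of parities.

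The main obstacle I expect is precisely this last matching: choosing $R$ and the change of variables so the $\ve$-orbit on each hyperbola $a^2-3b^2=N$ is swept out once and only once, with no double counting and no omission on the boundary of the wedge or along the locus where $1-q^{L}$ degenerates (the ``diagonal'' $a=\pm b\sqrt 3$, which does not meet $N>0$ but must still be tracked); reconciling the signs $(-1)^{n+j}$ and the global $-1$ with $(-1)^{a+b}$; and correctly treating the ramified primes $2$ and $3$ (the prime $3$ controlling the congruence condition), so that the level-set count of $a^2-3b^2=N$ agrees with the ideal count when $N$ is even or divisible by $3$. A secondary, more routine point is to justify the Bailey-pair manipulation in the first step, in particular the convergence of the resulting Hecke sum and the legitimacy of the rearrangement; this should be standard once the correct Bailey pair is pinned down.
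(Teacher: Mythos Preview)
Your outline matches the paper's proof in structure: Bailey-pair identity $\to$ Hecke-type double sum $\to$ match against a fundamental domain for ideals in $\Z[\sqrt 3]$. A few specifics differ from what you anticipate. First, the Bailey pair used is not one of the new pairs in Theorem~\ref{BaileyTheorem} but rather the Andrews--Hickerson pair of Theorem~\ref{AndrewBailey}, specialized with $q\mapsto q^2$, $b=-1$, $c=-q$, $a=q^2$, and then $\rho_1=q^2$, $\rho_2\to\infty$ in Bailey's Lemma; this gives
\[
f_7(q)=\sum_{\substack{n\ge 0\\ |j|\le n}}(-1)^{n+j}q^{3n^2+2n-j^2}\bigl(1-q^{2n+1}\bigr).
\]
Second, the factor $1-q^{2n+1}$ does \emph{not} encode the unit action or telescope over $\ve$-orbits as you suggest: after $q\mapsto q^3$ and multiplication by $q$, the two monomials simply become $q^{(3n+1)^2-3j^2}$ and $-q^{(3n+2)^2-3j^2}$, i.e.\ they account for the two residue classes $u\equiv 1,2\pmod 3$ (precisely the condition $3\nmid u$, i.e.\ $\Nrm(\mathbf a)\equiv 1\pmod 3$). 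Third, the fundamental domain is taken directly from Lemma~3 of \cite{ADH} as $u>0$, $-u/3<v\le u/3$, which is exactly $|j|\le n$ under $u=3n+1$ or $u=3n+2$, $v=j$; no delicate wedge-matching or boundary analysis is needed, and the sign comes from $(-1)^{n+j}=-(-1)^{(3n+1)^2-3j^2}=(-1)^{(3n+2)^2-3j^2}$. So the obstacles you flag largely evaporate once the explicit Hecke sum is in hand.
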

\noindent
To interpret $f_7$, we denote for  an overpartition $\lambda$ by  $M(\lambda)$  the number of times that the largest part occurs if the largest part is greater than one, and set $M(\lambda)=0$ otherwise.  
Let $P_7$ be the set of overpartitions with the following properties:  If the largest part is greater than one then the largest part equals the number of non-overlined parts plus one, the second largest part size is at most $M(\lambda)+1$, and only parts of size less than or equal to $M(\lambda)$ may be overlined.
In Subsection \ref{combinatorics7} we show that  
 $f_7(q)$ is the generating function for $\lambda\in P_7$  with an even number of parts minus those with an  odd  number of parts. 
\begin{theorem}\label{3mod3compthm}
The function
$$
f_8(q):=\sum_{n=1}^{\infty} \frac{(q)_{2n-1}}{(q^{2n};q^2)_{n}}q^n = q +q^3 -2q^4 -2q^8 +2q^9 + \dots -4q^{48} + \dots +3q^{81} + \dots
$$ 
satisfies 
\begin{equation}\label{3mod3compeqn}
q^{-1} f_8(q^3)= 
\sum_{   
\substack{
\textbf{a}\,  \subset O_L \\ \Nrm(\textbf{a}) \equiv  2 \pmod{3}
}}  
(-1)^{\Nrm(\textbf{a})}
q^{\Nrm(\textbf{a})}.
\end{equation}
\end{theorem}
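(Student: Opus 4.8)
The plan is to follow the two-step strategy used for the companion result Theorem~\ref{3case} and for the other theorems: first transform $f_8$ into a Hecke-type double sum, then identify that double sum with the ideal-counting generating series for $L=\Q(\sqrt3)$. For the first step, I would simplify the summand: using $(q^{2n};q^2)_n=(q^2;q^2)_{2n-1}/(q^2;q^2)_{n-1}=(q)_{2n-1}(-q)_{2n-1}/(q^2;q^2)_{n-1}$ and shifting $n\mapsto n+1$, one gets
$$
f_8(q)=\sum_{n\geq0}\frac{(q^2;q^2)_n}{(-q)_{2n+1}}\,q^{n+1}.
$$
The identical manipulation rewrites $f_7(q)=\sum_{n\geq0}(-1)^n\frac{(q^2;q^2)_n}{(-q)_{2n+1}}q^{n^2+n}$, so $f_7$ and $f_8$ are built from the same quotient $\beta_n:=(q^2;q^2)_n/(-q)_{2n+1}$ and differ only in the weight inserted into Bailey's lemma — a Gaussian weight $(-1)^nq^{n^2+n}$ for $f_7$ against a geometric weight $q^{n+1}$ for $f_8$. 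I would take the Bailey pair relative to $a=q$ (possibly after an overall normalization) underlying the proof of Theorem~\ref{3case} — one of the new pairs of Theorem~\ref{BaileyTheorem}, or a classical one — feed this pair into Bailey's lemma with the appropriate limits of the free parameters, expand $\beta_n$ in terms of the $\alpha_r$, and resum the inner $q$-sum to obtain
$$
f_8(q)=\sum_{(r,s)\in S}\varepsilon(r,s)\,q^{Q(r,s)},
$$
with $Q$ an indefinite binary quadratic form, $S$ a cone-shaped lattice region (of the type $|s|\le r$), and $\varepsilon(r,s)=\pm1$; a finite telescoping as in \cite{ADH} brings this into standard Hecke form with a factor $1\pm q^{\text{linear}}$. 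Pinning down the precise Bailey pair, the region $S$, the form $Q$, and the sign $\varepsilon$ is the computational heart of the argument and the step I expect to be the main obstacle.

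For the arithmetic side, I would use that $L=\Q(\sqrt3)$ has class number one, so every ideal $\mathbf a\subset O_L=\Z[\sqrt3]$ equals $(\alpha)$ with $\alpha=x+y\sqrt3$, $\Nrm(\mathbf a)=|x^2-3y^2|$, and the generator determined up to $\pm\varepsilon_0^k$, where $\varepsilon_0=2+\sqrt3$ is the fundamental unit (of norm $+1$). Since $x^2-3y^2\equiv1\pmod3$ whenever $3\nmid x$, the condition $\Nrm(\mathbf a)\equiv2\pmod3$ is equivalent to $3\nmid x$ together with $x^2-3y^2<0$ (so the absolute value lands in residue class $2$ rather than $1$); correspondingly Theorem~\ref{3case} selects the elements with $3\nmid x$ and $x^2-3y^2>0$, which is exactly why $f_7$ and $f_8$ are companions. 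Moreover $(-1)^{\Nrm(\mathbf a)}=(-1)^{x+y}$, so
$$
\sum_{\substack{\mathbf a\subset O_L\\ \Nrm(\mathbf a)\equiv2\,(3)}}(-1)^{\Nrm(\mathbf a)}q^{\Nrm(\mathbf a)}=\sum_{\substack{(x,y)\in F\\ 3\nmid x\\ x^2<3y^2}}(-1)^{x+y}\,q^{3y^2-x^2},
$$
where $F$ is a fundamental domain for the $\langle\pm\varepsilon_0\rangle$-action on $\{(x,y):x^2-3y^2\neq0\}$, cut out by inequalities of the type $1\le|\alpha/\bar\alpha|<\varepsilon_0^2$ plus a sign normalization on $x$; these unwind to an explicit cone. (Because $3$ ramifies, $(3)=(\sqrt3)^2$, ideals of norm divisible by $3$ contribute to neither $f_7$ nor $f_8$, so the two theorems together account for the full $3$-coprime part of $\sum_{\mathbf a}(-1)^{\Nrm(\mathbf a)}q^{\Nrm(\mathbf a)}$.)

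Finally I would exhibit an explicit linear change of variables $(r,s)\mapsto(x,y)$ carrying $S$ bijectively onto the region above, under which $3Q(r,s)-1=3y^2-x^2=\Nrm(\mathbf a)$ and $\varepsilon(r,s)=(-1)^{x+y}=(-1)^{\Nrm(\mathbf a)}$, in the spirit of the matchings for Theorems~\ref{3mod4thm} and~\ref{3case}. Replacing $q$ by $q^3$ turns each exponent $Q(r,s)$ into $3Q(r,s)$, and the normalizing factor $q^{-1}$ in \eqref{3mod3compeqn} appears because an exponent $n$ of $f_8$ corresponds to the norm $3n-1\equiv2\pmod3$; assembling the pieces yields \eqref{3mod3compeqn}. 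I expect the sign and congruence bookkeeping in this last step to be delicate but routine, with the genuine difficulty concentrated in the $q$-series transformation of the first step.
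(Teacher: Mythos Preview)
Your overall two–step plan (Hecke-type double sum, then ideal matching) is exactly the paper's strategy, and your rewriting
\[
f_8(q)=\sum_{n\ge0}\frac{(q^2;q^2)_n}{(-q)_{2n+1}}\,q^{n+1}
\]
is correct and makes the formal parallel with $f_7$ transparent.  The arithmetic discussion (class number one for $L$, $\varepsilon_0=2+\sqrt3$, the residue class $2\pmod3$ picking out generators with $x^2-3y^2<0$) is sound, although the paper actually passes through $q\mapsto q^6$ and the congruence $\Nrm(\mathbf a)\equiv -2\pmod6$, then divides by the unique ideal $(1+\sqrt3)$ of norm $2$; either route works.

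The genuine gap is in the first step.  You propose to reuse ``the Bailey pair underlying the proof of Theorem~\ref{3case}'' and mention Theorem~\ref{BaileyTheorem}.  Neither is what the paper does, and your suggestion does not obviously go through.  The proof of Theorem~\ref{3case} uses the Andrews--Hickerson pair of Theorem~\ref{AndrewBailey} (not the new pairs of Theorem~\ref{BaileyTheorem}); with that pair, Bailey's Lemma produces the Gaussian weight $(-1)^nq^{n^2+n}$ by sending $\rho_2\to\infty$, but there is no specialization of $\rho_1,\rho_2$ in Lemma~\ref{BaileyLemma} that converts this into the geometric weight $q^{n+1}$ you need for $f_8$.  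In particular the limiting trick ``divide by $1-\rho_2$ and let $\rho_2\to1$'' fails here because $B_0'=1\neq0$, so the $n=0$ term blows up.

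The paper instead pairs $f_8$ with $f_6$, not with $f_7$: it takes the Bailey pair $\mathcal A_n,\mathcal B_n$ from Lemma~12 of \cite{An} (relative to $a=1$, with $\mathcal B_0=0$), substitutes $q\mapsto q^2$, and applies Lemma~\ref{BaileyLemma} with $\rho_1=q$ and the $\rho_2\to1$ limit.  Because $\mathcal B_0=0$ the limit is harmless, and the $\beta$-side collapses to $\sum_{n\ge1}\frac{(q;q^2)_n(q^2;q^2)_{n-1}}{(q^{2n};q^2)_n}q^n=f_8(q)$.  The resulting Hecke-type identity is
\[
f_8(q)=-\!\!\sum_{\substack{n\ge1\\-n\le j\le n-1}}q^{6n^2-2n-2j^2-2j}(1+q^{4n})+\sum_{\substack{n\ge0\\|j|\le n}}q^{6n^2+4n+1-2j^2}(1+q^{4n+2}),
\]
after which the ideal matching is routine.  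So the missing idea is not a computation but the choice of Bailey pair: the correct companion to $f_8$ at the level of Bailey pairs is $f_6$, and this is what makes the $q$-series step work.
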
 
\noindent
Let $P_8$ be the set of overpartitions with the following properties:  The largest part cannot be overlined and the number of repetitions of the largest part size is greater than half of all non-overlined parts.  In Subsection \ref{combinatorics8}, we show that   $f_8$ is the generating function for overpartitions $\lambda\in P_8$ with an odd number of parts minus those with an even number of parts.




We next use the arithmetic of $K$ and $L$ to determine properties for the functions $f_i$ which resemble those of $\sigma$. 
\begin{corollary} \label{LacCorollary}
The functions  $f_i$ are lacunary. 
\end{corollary}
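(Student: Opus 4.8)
The plan is to use Theorems~\ref{16Theorem}--\ref{3mod3compthm} to reduce the lacunarity of every $f_i$ to a single classical fact about norms of ideals in a quadratic field. Inspecting those theorems, each identity there has, after the indicated substitution $q\mapsto q^{m_i}$ and multiplication by the fixed power $q^{c_i}$, the shape
\[
q^{c_i}f_i\!\left(q^{m_i}\right)\;=\;\pm\!\!\sum_{\substack{\mathbf a\subset O_F\\ \Nrm(\mathbf a)\equiv a_i\ (\mathrm{mod}\ b_i)}}\!\!\!\varepsilon_i(\mathbf a)\,q^{\Nrm(\mathbf a)},
\]
where $F=K=\Q(\sqrt2)$ for $i=1,2,3,4$ and $F=L=\Q(\sqrt3)$ for $i=5,6,7,8$, where the congruence condition is either vacuous or one of those occurring in the theorems, and where $\varepsilon_i(\mathbf a)$ is one of $1$, $(-1)^{\Nrm(\mathbf a)}$, $\leg{-4}{\Nrm(\mathbf a)}$, hence in every case a function of $\Nrm(\mathbf a)$ alone. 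Consequently the coefficient of $q^N$ on the right equals $\pm\,\varepsilon_i(N)\,r_F(N)$ when $N\equiv a_i\ (\mathrm{mod}\ b_i)$ and $0$ otherwise, where $r_F(N):=\#\{\mathbf a\subset O_F:\Nrm(\mathbf a)=N\}$ is the number of integral ideals of norm $N$; in particular it vanishes whenever $r_F(N)=0$, i.e. whenever $N$ is not the norm of an integral ideal of $F$ (the sum is then empty).

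Thus it suffices to show that the set $\mathcal A_F:=\{N\in\N:\ r_F(N)>0\}$ of ideal norms has natural density zero. Here $r_F$ is the multiplicative function with $r_F(p^k)=k+1$, $1$, or $\tfrac{1+(-1)^k}{2}$ according as $p$ splits, ramifies, or is inert in $F$, so that $N\in\mathcal A_F$ exactly when every prime inert in $F$ divides $N$ to an even power. Since the inert primes are those with $\leg{D_F}{p}=-1$ and hence have Dirichlet density $\tfrac12$, the indicator $\mathbf 1_{\mathcal A_F}$ is multiplicative and
\[
\sum_{N\ge1}\frac{\mathbf 1_{\mathcal A_F}(N)}{N^{s}}\;=\;\prod_{\substack{p\text{ split or}\\ \text{ramified}}}\frac1{1-p^{-s}}\;\prod_{p\text{ inert}}\frac1{1-p^{-2s}}\;=\;\zeta(s)^{1/2}H(s),
\]
with $H$ holomorphic and non-vanishing near $s=1$ (using $L(1,\chi_{D_F})\neq0$). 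By the Landau--Selberg--Delange theorem it follows that $\#\{N\le x:\ N\in\mathcal A_F\}\sim c_F\,x(\log x)^{-1/2}=o(x)$; for $F=\Q(i)$ this is exactly Landau's theorem that the integers expressible as a sum of two squares have density zero, and the argument is identical here.

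Finally, passing from $q^{c_i}f_i(q^{m_i})$ back to $f_i(q)$ transports the support of the non-zero coefficients by the affine map $N\mapsto (N-c_i)/m_i$, which sends a density-zero set to a density-zero set; the congruence restrictions and the signs $\varepsilon_i$ can only remove further coefficients. Hence, for each $i$, the non-zero coefficients of $f_i$ are supported on a set of density zero, which is precisely the assertion. There is essentially no hard step: the only input with real content is the classical density-zero statement for ideal norms, while the rest — checking that each $f_i$ has the stated theta-series form and that the reductions preserve density zero — is bookkeeping.
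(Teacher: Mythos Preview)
Your argument is correct and follows essentially the same route as the paper: reduce lacunarity to the classical fact that the set of integers represented as norms of ideals (equivalently, as values of the associated indefinite binary quadratic forms) has natural density zero. The paper phrases this via the Hecke-type sums of Propositions~\ref{Hecke1}--\ref{Hecke8} and invokes Theorem~1 of Odoni~\cite{Od} for the density-zero input, whereas you pass through the ideal-sum formulations of Theorems~\ref{16Theorem}--\ref{3mod3compthm} and supply that input yourself via a Landau--Selberg--Delange estimate; the content is the same, with your version being self-contained at the cost of quoting a Tauberian theorem while the paper's is a one-line citation.
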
 
We let 
\begin{eqnarray*}
S_i&:=&
\left\{  
m \in \Z, \text{ there are infinitely many } n \text{ such that } a_{f_i}(n)=m, \right. \\
& & \left.
\text{where }  a_{f_i}(n)
\text{ denotes the }  n \text{ th coefficient of } f_i.
\right\}
\end{eqnarray*}
\begin{corollary} \label{InfCorollary}
We have  
$$
S_i= 
\left\{ 
\begin{array}{ll}
\N_0&\text{if  } i \in \{1,2,5\},\\
-\N_0&\text{if } i=6,\\ 
\Z&\text{if } i \in \{ 4,7\},\\ 
\N_0 \cup -2 \N&\text{if } i\in \{3,8 \}.
\end{array}
\right.
$$  
\end{corollary}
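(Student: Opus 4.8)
The starting point is the identities of Theorems~\ref{16Theorem}--\ref{3mod3compthm}, which express each $f_i$ (after a rescaling $q\mapsto q^N$ and a power of $q$) as a sum over integral ideals $\mathbf a$ of $O_K$ or $O_L$ whose norm lies in a fixed residue class, possibly twisted by a Kronecker symbol or by $(-1)^{\Nrm(\mathbf a)}$. Thus, up to elementary bookkeeping, $a_{f_i}(n)$ equals a weighted count of ideals of a prescribed norm, and the weight is a multiplicative function of the ideal. Writing $r(n)$ for the number of integral ideals of norm $n$ in the relevant real quadratic field, one has $r(n)=\sum_{d\mid n}\chi(d)$ where $\chi$ is the Kronecker symbol of the field (so $\chi = \leg{8}{\cdot}$ for $K=\Q(\sqrt2)$ and $\chi=\leg{12}{\cdot}$ for $L=\Q(\sqrt3)$); the restriction on $\Nrm(\mathbf a)$ modulo $16$, $4$, or $3$, and the extra twists, just replace $r(n)$ by $r(n)$ times the indicator of a congruence and/or by a further Dirichlet character evaluated on $n$. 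Hence $a_{f_i}(n)$, as a function of $n$ (after undoing the scaling), is, up to sign, a finite $\Z$-linear combination of multiplicative functions of the shape $n\mapsto \sum_{d\mid n}\psi(d)$ for explicit Dirichlet characters $\psi$.

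For Corollary~\ref{InfCorollary} the first task is to identify, for each $i$, the exact set of \emph{values} taken infinitely often. The key structural fact is multiplicativity together with an analysis of the local factors at primes. For a prime $p$ with $\chi(p)=1$ (split), the local factor at $p^a$ is $a+1$ (or an alternating variant, $1-1+1-\cdots$, when a $(-1)^{\Nrm}$-twist is present, giving $1$ or $0$); for $p$ inert it is $1$ or $0$ alternately; for $p$ ramified it is $1$. The congruence restrictions mod $16$, mod $4$, mod $3$ interact with this through the splitting of primes, but since the moduli are powers of the field discriminant (or small), the congruence class of $\Nrm(\mathbf a)$ is itself controlled by the classes of the primes dividing it, and one checks that it imposes no obstruction to realizing the generic local behaviour. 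Concretely: to get the value $m>0$ one takes $n=p$ a suitable split prime to the power forcing $a+1=m$ when no alternating sign occurs, or a product of such; to get $0$ one uses an inert prime to an odd power (available whenever $0$ is listed, i.e.\ for $i=3,4,7,8$); to get a negative value $-m$ one combines a factor contributing $-1$ (an inert prime to an odd power in an alternating case, present exactly when the field norm twist $(-1)^{\Nrm}$ appears, i.e.\ $i\in\{3,4,7,8\}$, or the global minus sign in front, i.e.\ $i=6$) with split primes contributing $m$. This explains the four cases: $i\in\{1,2,5\}$ have no sign mechanism so only $\N_0$; $i=6$ has a global $-1$ and no local sign, giving $-\N_0$; $i\in\{4,7\}$ have a local alternating sign at inert primes that can be made $\pm1$ freely and composed with arbitrary positive factors, giving all of $\Z$; and $i\in\{3,8\}$ have the $(-1)^{\Nrm}$-twist which forces the sign of any nonzero value to match the parity of the norm, and one computes that the realizable negative values are exactly $-2\N$ (an odd negative value would require an odd norm producing a sign $-1$, which the twist forbids), together with all of $\N_0$.

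The mechanics I would carry out are: (1) pin down for each $i$ the precise arithmetic function $n\mapsto a_{f_i}(n)$ from its defining identity, including the scaling $q\mapsto q^N$ and shift, so that the congruence class mod $N$ of the exponent is itself a constraint (e.g.\ for $f_1$ only $n\equiv 1\pmod{16}$ contribute, after dividing the exponent by $16$); (2) record the three local factors (split/inert/ramified) in each case, with the alternating modification wherever a $(-1)^{\Nrm}$ or $\leg{-4}{\cdot}$ twist is present; (3) for the ``$\subseteq$'' direction, bound the sign and the parity of nonzero coefficients to rule out the values not listed --- this is where the $\N_0\cup(-2\N)$ shape for $i=3,8$ must be extracted, by showing a nonzero coefficient at $n$ has sign $(-1)^n$ and that odd $n$ forces $|a_{f_i}(n)|$ to be... a value of the wrong parity to be negative; (4) for the ``$\supseteq$'' direction, exhibit for each target value an explicit infinite family of $n$ (using Dirichlet's theorem to guarantee infinitely many primes in the needed splitting classes), where multiplicativity lets one multiply an ``atom'' realizing the sign by atoms realizing any positive magnitude. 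Corollary~\ref{LacCorollary} is the easy companion: each $a_{f_i}(n)$ is a multiplicative function supported on norms from a real quadratic field, hence (as in the classical argument of \cite{ADH} via a theorem of Serre on lacunarity of $\eta$-quotients / the fact that $\sum_{\mathbf a}q^{\Nrm(\mathbf a)}$ has density-zero support because a positive proportion of primes are inert) the series is lacunary.

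\textbf{Main obstacle.} The genuinely delicate point is step~(3) for $i\in\{3,8\}$: proving that the negative values attained infinitely often are \emph{exactly} $-2\N$ and not all of $-\N$. This requires showing that whenever $a_{f_i}(n)\neq 0$ and $n$ is such that the norm-parity twist makes the contribution negative, the magnitude is forced to be even --- i.e.\ a careful parity analysis of $\sum_{d\mid n}\chi(d)$ over the divisors, using that in the relevant residue classes an odd value of this divisor sum can only occur when the sign is $+$. All the other cases are comparatively mechanical once the local factors are tabulated.
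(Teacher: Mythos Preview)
Your approach is exactly the one the paper takes: observe that in every case the weight depends only on $\Nrm(\mathbf a)$, so the $n$-th coefficient equals $r(n)\cdot w(n)$ with $r(n)$ the number of ideals of the given norm and $w(n)\in\{-1,0,1\}$; then use the explicit multiplicative formula for $r(p^\ell)$ (split: $\ell+1$; inert: $1$ or $0$; ramified: $1$) together with Dirichlet's theorem to realize each target value infinitely often, and a parity argument on $r(n)$ to exclude the forbidden values. The paper's own proof is in fact just a two-line sketch of this, so your write-up is already more detailed than what appears there.

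One small correction to your mechanism descriptions, which would matter when you write out the details: for $i\in\{4,7\}$ the sign is $\pm(-1)^{\Nrm(\mathbf a)}$, and the way to toggle it freely is via the \emph{ramified} prime $2$ (including a factor of $2$ or $4$ changes the parity of the norm without changing $r$), not via an ``alternating sign at inert primes''. Likewise for $i=3$ the twist is $\leg{-4}{\cdot}$, not $(-1)^{\Nrm}$; the parity argument you outline still works (an odd value of $r(m)$ forces $m$ to be an odd square, hence $\leg{-4}{m}=+1$), but the bookkeeping is slightly different from the $i=8$ case. With those adjustments your step-(3)/(4) analysis goes through verbatim.
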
 

It would be interesting to further investigate the functions $f_i$ recovered here, for example relating them
 to   harmonic Maass forms as described above for $\sigma$. 
In particular the combinatorics of $f_2$ resemble those of $\sigma$. 
So the question  arises, whether one can construct new mock theta functions by  
 considering unrestricted partitions instead of partitions into distinct parts.
Furthermore $\sigma$ also occurred in interesting number theoretical identities involving sums of tails of $\eta$-quotients (see for example \cite{AJO,LO}).
I would be interesting to investigate whether our functions $f_i$ play related roles. 
We plan to address these questions in future research.

The paper is organized as follows.  
Section \ref{NewBailey} is devoted to establishing two new Bailey pairs required for the proofs of Theorems \ref{16Theorem} and \ref{16Companion} which are of independent interest. 
In Section \ref{ProofSection} we write the functions $f_i$ as Hecke-type sums and establish our main theorems. 
In Section \ref{combinatorics} we prove  the natural connection of the functions $f_i$ to the above  described partition statistics. 
\section*{Acknowledgements}
The authors thank Jeremy Lovejoy  for helpful comments on earlier versions of this paper.

\section{Two new Bailey pairs} \label{NewBailey}
Here we establish  two new Bailey pairs required for the proofs of Theorems  \ref{16Theorem} and 
\ref{16Companion}. 
Let us first recall the definition of a Bailey pair and Bailey's Lemma.  
For details and background on Bailey pairs, we refer the reader to Chapter 3 of \cite{An2}.  
\begin{definition}
Two sequences $\{\alpha_n\}$ and $\{\beta_n\}$  form a \textbf{Bailey pair relative to a} 
 if 
for all $n \geq 0$, we have
$$
\beta_n= \sum_{r=0}^n
\frac{\alpha_r}{(q)_{n-r}(aq)_{n+r}}.
$$ 
\end{definition}
Moreover if only the $\beta_n$ are given, then $\alpha_n$ can be determined  using Bailey inversion:
\begin{equation} \label{BaileyInversion}
\alpha_n= \left( 1-aq^{2n}\right) 
\sum_{j=0}^n \frac{(aq)_{n+j-1}(-1)^{n-j}q^{n-j \choose 2} }{(q)_{n-j}} 
\beta_j.
\end{equation} 

To establish our main theorems we will use a limiting case of Bailey's Lemma. 
\begin{lemma} \label{BaileyLemma}
If $\alpha_n$ and $\beta_n$ form a Bailey pair relative to $a$, then 
we have, providing both side converge absolutely,
$$
\sum_{n=0}^{\infty} 
\frac{(\rho_1,\rho_2)_n}{ \left( \frac{aq}{\rho_1},\frac{aq}{\rho_2} \right)_n} 
\left(\frac{aq}{ \rho_1 \rho_2} \right)^n \, \alpha_n
= \frac{\left(aq,\frac{aq}{\rho_1 \rho_2}   \right)_{\infty}}{ \left(\frac{aq}{\rho_1} ,\frac{aq}{\rho_2}   \right)_{\infty}} 
\sum_{n=0}^{\infty}(\rho_1,\rho_2)_n \left(\frac{aq}{\rho_1 \rho_2}  \right)^n \, \beta_n,
$$
where $(a_1, \dots,a_r;q)_n=(a_1,\dots,a_r)_n:=\prod_{j=1}^{r}(a_j)_n.$
\end{lemma}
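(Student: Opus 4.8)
The statement to prove is the limiting case of Bailey's Lemma (Lemma \ref{BaileyLemma}). The plan is to derive it from the standard (unspecialized) Bailey Lemma, or directly from the definition of a Bailey pair together with the $q$-analogue of Gauss's summation, by a careful interchange of summation and a limiting argument.

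\medskip

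\textbf{The approach.} First I would recall the iterative form of Bailey's Lemma in its strong (unspecialized) version: if $(\alpha_n,\beta_n)$ is a Bailey pair relative to $a$, then so is $(\alpha_n',\beta_n')$, where
\begin{equation*}
\alpha_n' = \frac{(\rho_1,\rho_2)_n}{\left(\frac{aq}{\rho_1},\frac{aq}{\rho_2}\right)_n}\left(\frac{aq}{\rho_1\rho_2}\right)^n\alpha_n,
\qquad
\beta_n' = \sum_{j=0}^{n}\frac{(\rho_1,\rho_2)_j\left(\frac{aq}{\rho_1\rho_2}\right)_{n-j}}{\left(\frac{aq}{\rho_1},\frac{aq}{\rho_2}\right)_n(q)_{n-j}}\left(\frac{aq}{\rho_1\rho_2}\right)^j\beta_j.
\end{equation*}
This in turn follows from the definition of a Bailey pair by substituting $\beta_n=\sum_r \alpha_r/((q)_{n-r}(aq)_{n+r})$ into the formula for $\beta_n'$, interchanging the order of summation, and evaluating the inner sum by the $q$-Gauss sum (the ${}_6\phi_5$ or, after the relevant specialization, the $q$-Vandermonde identity); this is the classical computation and I would cite Chapter 3 of \cite{An2} rather than reproduce it. Then writing out the defining relation $\beta_n'=\sum_{r=0}^n \alpha_r'/((q)_{n-r}(aq)_{n+r})$ for the new pair and letting $n\to\infty$: the left-hand side tends to $\sum_{n\ge0}\alpha_n'$ divided by nothing extra, while on the right one uses $(q)_{n-r}\to(q)_\infty$ and $(aq)_{n+r}\to(aq)_\infty$. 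Matching the two descriptions of the $n\to\infty$ limit of $\sum_{r}\alpha_r'/((q)_{n-r}(aq)_{n+r})$ — once via $\beta_n'$ and once via $\alpha_n'$ directly — and simplifying the telescoping infinite products
$$
\lim_{n\to\infty}\frac{\left(\frac{aq}{\rho_1\rho_2}\right)_{n-j}}{\left(\frac{aq}{\rho_1},\frac{aq}{\rho_2}\right)_n}\cdot\frac{1}{(q)_{n-j}}
\ \longmapsto\
\frac{\left(aq,\frac{aq}{\rho_1\rho_2}\right)_\infty}{\left(\frac{aq}{\rho_1},\frac{aq}{\rho_2}\right)_\infty}\cdot\frac{1}{(aq)_\infty(q)_\infty}
$$
(reorganizing the quotients of shifted factorials into their infinite-product limits) yields exactly the asserted identity.

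\medskip

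\textbf{Order of steps.} (1) State the strong form of Bailey's Lemma and either prove it via $q$-Gauss or quote it from \cite{An2}. (2) Apply the defining relation of the iterated Bailey pair $(\alpha_n',\beta_n')$ and take $n\to\infty$ on both sides. (3) Evaluate the limits of the relevant $q$-Pochhammer quotients as infinite products, using $(x)_{n}\to(x)_\infty$ and the ratio identities $(x)_n=(x)_\infty/(xq^n)_\infty$. (4) Collect terms and check that the $\rho_1,\rho_2$ factors assemble into the stated prefactor $\bigl(aq,\frac{aq}{\rho_1\rho_2}\bigr)_\infty/\bigl(\frac{aq}{\rho_1},\frac{aq}{\rho_2}\bigr)_\infty$. (5) Justify the passage to the limit and the interchange of sum and limit under the absolute convergence hypothesis stated in the lemma, e.g. by dominated convergence on the discrete measure, noting that the tails are uniformly controlled once $\max(|\rho_i^{-1}|,\ldots)$-type conditions hold.

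\medskip

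\textbf{Main obstacle.} The genuinely delicate point is the analytic justification of the term-by-term passage to the limit $n\to\infty$: one must ensure that $\sum_r \alpha_r'/((q)_{n-r}(aq)_{n+r})$ converges to $\sum_r \alpha_r'/((q)_\infty(aq)_\infty)$ and that this equals $\lim_n \beta_n'$, which requires uniform bounds on the summands in $r$ (and in $j$ for $\beta_n'$) that are summable — precisely what the ``providing both sides converge absolutely'' proviso is designed to grant. The algebraic manipulation of $q$-Pochhammer symbols is routine bookkeeping; the convergence argument is where care is needed, and in a paper at this level it is standard to dispatch it by invoking dominated convergence together with the stated absolute-convergence hypothesis, so I would keep that part brief but explicit.
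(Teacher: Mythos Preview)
The paper does not actually prove Lemma \ref{BaileyLemma}; it is stated as a known ``limiting case of Bailey's Lemma'' and the reader is referred to Chapter~3 of \cite{An2} for background on Bailey pairs. So there is no proof in the paper to compare yours against.

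That said, your proposal is correct and is precisely the standard derivation one finds in the literature: quote the strong (iterative) form of Bailey's Lemma producing the new pair $(\alpha_n',\beta_n')$, write the defining relation $\beta_n'=\sum_{r\le n}\alpha_r'/((q)_{n-r}(aq)_{n+r})$, and let $n\to\infty$ on both sides, evaluating $\lim_n\beta_n'$ once via the explicit formula for $\beta_n'$ and once via the Bailey relation. Equating the two limits and clearing the factors $(q)_\infty$, $(aq)_\infty$ gives the identity exactly as stated. Your identification of the analytic step (dominated convergence under the absolute-convergence hypothesis) as the only non-formal point is accurate, and dispatching it briefly is appropriate at this level. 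One cosmetic remark: the displayed ``$\longmapsto$'' line is a bit muddled---the clean statement is simply that as $n\to\infty$ one has $(aq/\rho_1\rho_2)_{n-j}\to(aq/\rho_1\rho_2)_\infty$, $(aq/\rho_1,aq/\rho_2)_n\to(aq/\rho_1,aq/\rho_2)_\infty$, $(q)_{n-j}\to(q)_\infty$, and on the other side $(q)_{n-r}(aq)_{n+r}\to(q)_\infty(aq)_\infty$; the prefactor $(aq,\tfrac{aq}{\rho_1\rho_2})_\infty/(\tfrac{aq}{\rho_1},\tfrac{aq}{\rho_2})_\infty$ then drops out after multiplying through by $(aq)_\infty(q)_\infty$.
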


We show the following.
\begin{theorem} \label{BaileyTheorem}
\begin{enumerate}
\item  
The sequences $a_n,b_n$ form a Bailey pair relative to  $a=1$:
\begin{eqnarray*}
b_0=b_0(q)&:=&0,\\
b_n=b_n(q)&:=&\frac{(-1)^n (q;q^2)_{n-1}}{(q)_{2n-1}},\\   
a_{2n} =a_{2n}(q)&:=& (1-q^{4n}) q^{2n^2-2n}\sum_{j=-n}^{n-1} q^{-2j^2-2j},\\
a_{2n+1}=a_{2n+1}(q) &:=& -(1-q^{4n+2}) q^{2n^2}\sum_{|j|\leq n} q^{-2j^2}.
\end{eqnarray*}
\item The sequences $\alpha_n,\beta_n$ form a Bailey pair relative to  $a=q$:
   \begin{eqnarray*}
   \beta_n=\beta_n(q)&:=&\frac{(-1)^n\left(q;q^2 \right)_n}{(q)_{2n+1}},\\  
      \alpha_{2n}=\alpha_{2n}(q)&:=& \frac{1}{1-q} \left(q^{2n^2+2n} \sum_{-n\leq j\leq n-1} q^{-2j^2-2j}  + q^{ 2n^2} \sum_{|j| \leq n} q^{-2j^2}\right),\\
   \alpha_{2n+1}=\alpha_{2n+1}(q)&:=&- \frac{1}{1-q} \left(q^{2n^2+4n+2} \sum_{|j| \leq n}q^{-2j^2}   
   + q^{ 2n^2+2n}
   \sum_{-n-1\leq j \leq n} q^{-2j^2-2j} \right).
   \end{eqnarray*}
\end{enumerate}
\end{theorem}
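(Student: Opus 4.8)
The plan is to verify both Bailey pairs directly from Bailey inversion. The defining relation $\beta_n=\sum_{r=0}^n \alpha_r/((q)_{n-r}(aq)_{n+r})$ is a triangular linear system with nonvanishing diagonal, so for a prescribed sequence $\{\beta_n\}$ there is a \emph{unique} companion sequence $\{\alpha_n\}$, and (\ref{BaileyInversion}) is its closed form. Hence it suffices to feed the explicit products $b_n$ (with $a=1$) and $\beta_n$ (with $a=q$) into (\ref{BaileyInversion}) and check that the right-hand side reproduces the stated Hecke-type sums; the degenerate value $b_0=0$ then forces $a_0=0$, in agreement with the empty-sum conventions in the displayed formulas, and the $n=0$ case of part (2) is the easy identity $\alpha_0=\beta_0=1/(1-q)$.

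Concretely, for part (1) one is reduced, after the elementary simplifications
\[
\frac{(q;q^2)_{j-1}}{(q)_{2j-1}}=\frac{1}{(1-q^{2j-1})(q)_{j-1}(-q)_{j-1}},\qquad
\frac{(q)_{n+j-1}}{(q)_{n-j}}=\left(q^{n-j+1};q\right)_{2j-1},
\]
to proving that for all $n\ge 1$
\[
(-1)^n\bigl(1-q^{2n}\bigr)\sum_{j=1}^{n}\frac{q^{\binom{n-j}{2}}\left(q^{n-j+1};q\right)_{2j-1}}{(1-q^{2j-1})(q)_{j-1}(-q)_{j-1}}=a_n ,
\]
and similarly for part (2), where now the sum starts at $j=0$ and, because $aq=q^2$ forces $(q^2;q)_{n+j-1}=(q)_{n+j}/(1-q)$, a single extra factor $1/(1-q)$ appears. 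Each side is a terminating sum, so there are no convergence issues; the only genuinely ``theta-like'' ingredient on the left is the factor $q^{\binom{n-j}{2}}$, whereas the right-hand side $a_n$ (resp.\ $\alpha_n$) is a finite binary theta series whose summation range, $\sum_{-m\le j\le m-1}$ or $\sum_{|j|\le m}$, depends on the parity of the subscript. I would therefore split according to whether $n$ is even or odd and handle the four resulting identities separately.

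Each of these terminating identities I would prove by induction on $m$ (writing $n=2m$ or $n=2m+1$). The base cases $m=0,1$ are direct. For the inductive step, observe that the right-hand theta sums satisfy a completely transparent recurrence: passing from $m$ to $m+1$ appends exactly two boundary terms (for instance $j=-m-1$ and $j=m$ in $\sum_{-m\le j\le m-1}q^{-2j^2-2j}$), and in each case those two exponents coincide, so the partial theta sum grows by an explicit monomial. It then remains to show that the left-hand $q$-hypergeometric sum satisfies the matching recurrence in $m$; this I would obtain either by creative telescoping (a Zeilberger-type certificate in $j$) or, by hand, from the contiguous relation for $\left(q^{n-j+1};q\right)_{2j-1}$ together with the splitting $1-q^{2n}=(1-q^n)+q^n(1-q^n)$. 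As an alternative for part (2), one can compare $\beta_n$ with $b_{n+1}$ and try to descend from the $a=1$ pair to the $a=q$ pair by a change-of-parameter manipulation of the defining relations; I would use whichever bookkeeping turns out to be lighter.

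The main obstacle is exactly this recurrence check for the hypergeometric side. Although the sums terminate, the quadratic exponent $\binom{n-j}{2}$ means one is in effect proving a bona fide theta identity, and reconciling a one-sided hypergeometric sum with the two-sided theta ranges $\sum_{|j|\le m}q^{-2j^2}$ and $\sum_{-m\le j\le m-1}q^{-2j^2-2j}$ requires the reflection $j\mapsto -1-j$ and careful tracking of signs and parities. I expect the computation to be routine but long, and I would organize it so that the even- and odd-subscript cases of each of the two Bailey pairs run through the same template.
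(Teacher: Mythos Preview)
Your plan is correct and, at the structural level, coincides with the paper's proof: both start from Bailey inversion, recognize that the theta side obeys a simple step-two recurrence (since passing from $m$ to $m+1$ in either of the sums $\sum_{-m\le j\le m-1}q^{-2j^2-2j}$ or $\sum_{|j|\le m}q^{-2j^2}$ adds two equal boundary terms), and then reduce everything to checking that the hypergeometric side satisfies the same recurrence. The difference is in how this last step is carried out, and the paper's route is tighter. Rather than splitting into four parity cases, the paper packages the Bailey-inverted sum (with the factor $1-q^{2n}$ removed) as a single quantity
\[
U_n:=(-1)^n\sum_{j=1}^{n}\begin{bmatrix}n+j-1\\2j-1\end{bmatrix}q^{\binom{n-j}{2}}(q;q^2)_{j-1},
\]
so that $a_n=(1-q^{2n})U_n$, and proves the single recurrence $U_{n+2}=q^{2n}U_n+2(-1)^n$. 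The verification of this recurrence is not done by Zeilberger or ad hoc contiguous relations: after rewriting $V_n:=(-1)^n(U_{n+2}-q^{2n}U_n)$, the paper recognizes the resulting sum as the terminating ${}_2\phi_1$ to which Heine's summation applies (with $a=q^{n+1}$, $b=q^{-n-1}$, $c=-q$), collapsing the whole thing to the constant $2$ in one stroke. For part (2), the paper does exactly what you gesture at (``descend from the $a=1$ pair to the $a=q$ pair''), but makes it concrete via the splitting $1-q^{2n+1}=(1-q^{n+j+1})+q^{n+j+1}(1-q^{n-j})$ inside the inversion sum, which yields $\alpha_n=\frac{1}{1-q}(-U_{n+1}+q^{2n}U_n)$ directly in terms of the already-computed $U$'s. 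Your approach would work, but the Heine step and the single $U_n$ bookkeeping save you from four parallel inductions and from hunting for a WZ certificate.
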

\begin{proof}

We first define 
\begin{equation} \label{DefineUn}
   U_n:=(-1)^n 
  \sum_{j=1}^n   \left[
  \begin{matrix}n+j-1\\2j-1\end{matrix}
   \right] 
 q^{{n-j} \choose {2}}  (q;q^2)_{j-1} ,
   \end{equation}
   where $\left[\begin{matrix}n\\j \end{matrix}  \right]:=\frac{(q)_n}{(q)_j(q)_{n-j}}.$
   \begin{proposition} \label{Un}
   We have for $n \geq 0$:
   \begin{eqnarray*}
   U_{2n}&=&q^{2n^2-2n}  \sum_{j=-n}^{n-1} q^{-2j^2-2j},\\ 
   U_{2n+1}&=&-q^{2n^2}\sum_{|j|\leq n}q^{-2j^2}
   .
   \end{eqnarray*}
   \end{proposition}
   \begin{proof}
   It is easy to see that both sides of Proposition \ref{Un} satisfy  $U_0=0$ and  $U_1= -1$.
   To finish the proof it is enough to show that (\ref{DefineUn}) satisfies  the recurrence
   \begin{equation}  \label{Urelation}
   U_{n+2}= q^{2n}U_n+2(-1)^n.
   \end{equation}
   To prove (\ref{Urelation}), we let
     $$
   V_n:=
   (-1)^n \left( U_{n+2}-q^{2n}U_n\right).
   $$
   Inserting (\ref{DefineUn})   yields 
\begin{eqnarray*}
   V_n&=& \sum_{j=1}^{n+2} \frac{(q)_{n+1+j}q^{\frac12(n+2-j)(n+1-j)} }{(q)_{2j-1}(q)_{n-j+2}} 
 (q;q^2)_{j-1}
 - \sum_{j=1}^{n} \frac{(q)_{n-1+j}q^{\frac12(n-j)(n-1-j)+2n} }{(q)_{2j-1}(q)_{n-j}} 
 (q;q^2)_{j-1}\\
&=&    q\sum_{j=1}^{n+2} 
   \frac{(q)_{n+j-1}(q;q^2)_j}{(q)_{2j-1}(q)_{n-j+2}} 
   \left( 
   1-q^{2n+2}
   \right)q^{\frac12(n-j)^2+\frac{3n}{2}-\frac{3j}{2}  }.
\end{eqnarray*}
Using 
   \begin{eqnarray*}
   (q)_{n+j-1}&=& (q)_{n-1}\left( q^n \right)_j,\\
   (q)_{n+2-j}&=&
   \frac{(q)_{n+2}}{q^{ (n+2)j} q^{ -\frac{j(j-1)}{2}  } (-1)^j\left(q^{-n-2} \right)_j}, 
  \end{eqnarray*}
we obtain 
$$
   V_n =
    \frac{q^{1+\frac{n^2}{2}+\frac{3n}{2}} \left( 1+q^{n+1}\right)}{\left( 1-q^n\right) \left(1-q^{n+2}\right)}  
  \sum_{j=1}^{\infty} 
   \frac{(-1)^j\left(q^n,q^{-n-2} \right)_jq^j}{(-q,q)_{j-1}}.
$$
   We next recall the   following transformation which is due  to Heine (see (2.6) of \cite{An2}) 
   \begin{equation} \label{Heine}
   \sum_{n=0}^{\infty} \frac{(a,b)_n}{ (q,c)_n} \left(\frac{c}{ab} \right)^n
   = \frac{\left(\frac{c}{a} ,\frac{c}{b} \right)_{\infty}}{\left( c,\frac{c}{ab}\right)_{\infty}} .
   \end{equation}
   We use (\ref{Heine}) 
    with $a=q^{n+1}$, $b=q^{-n-1}$, and $c=-q$, yielding
   $$
   V_n=
   \frac{q^{\frac{n^2}{2}+\frac{n}{2} }\left(-q^{-n},-q^{n+2}\right)_{\infty} \left(1+q^{n+1} \right)}{(-q,-q)_{\infty}}.
   $$
Then $V_n=2$ follows from the identities 
   \begin{eqnarray*}
   \left( -q^{-n}\right)_{\infty}&=&2 q^{ -\frac{n(n+1)}{2} } (-q)_n(-q)_{\infty},\\
   \left( -q^{n+2}\right)_{\infty}&=&\frac{(-q)_{\infty}}{(-q)_{n+1}} .
   \end{eqnarray*}
This proves (\ref{Urelation}) and thus Proposition \ref{Un}. 
   \end{proof}  
   We now show  Theorem \ref{BaileyTheorem} (1). 
Using the Bailey inversion (\ref{BaileyInversion}) with $a=1$, we have 
\begin{eqnarray*}
a_n&=& (1-q^{2n})\sum_{j=0}^{n} \frac{(q)_{n+j-1}}{(q)_{n-j}} (-1)^{n-j}q^{\binom{n-j}{2}} b_j\\
&=& (1-q^{2n})\sum_{j=1}^{n} \frac{(q)_{n+j-1}}{(q)_{n-j}(q)_{2j-1}}(q;q^2)_{j-1} (-1)^{n}q^{\binom{n-j}{2}}\\
&=& (1-q^{2n}) (-1)^{n} \sum_{j=1}^{n} \left[
  \begin{matrix}n+j-1\\2j-1\end{matrix}
   \right]  (q;q^2)_{j-1}q^{\binom{n-j}{2}}\\
&=& (1-q^{2n})U_n.
\end{eqnarray*}
This directly gives (1) using Proposition  \ref{Un}.

We next turn to the proof of Theorem \ref{BaileyTheorem} (2) and  use   (\ref{BaileyInversion}) with $a=q$ to obtain 
  \begin{eqnarray*}
  \alpha_n&=&\frac{1-q^{2n+1}}{1-q} \sum_{j=0}^n \frac{(q)_{n+j}}{(q)_{n-j}} (-1)^{n+j}\, q^{{n-j} \choose{2}} \beta_j \\
  &=& \frac{1-q^{2n+1}}{1-q} (-1)^n\sum_{j=0}^n \frac{(q)_{n+j}(q;q^2)_j}{(q)_{n-j}(q)_{2j+1}}  q^{{n-j} \choose{2}}  \\
  &=& \frac{1}{1-q} (-1)^n\sum_{j=0}^n   \left[
  \begin{matrix}n+j+1\\n-j \end{matrix}
   \right]
  \frac{1-q^{2n+1}}{1-q^{n+j+1}}  q^{{n-j} \choose{2}}  (q;q^2)_j.
  \end{eqnarray*}
Writing 
  $$
  1-q^{2n+1}= 1-q^{n+j+1}+q^{n+j+1}\left( 1-q^{n-j}\right)
  $$
yields
  \begin{eqnarray*}
  \alpha_n &= &
  \frac{1}{1-q} (-1)^n \left(
  \sum_{j=0}^n   \left[
  \begin{matrix}n+j+1\\n-j \end{matrix}
   \right]  q^{{n-j} \choose{2}}  (q;q^2)_j 
   +   \sum_{j=0}^{n-1}   \left[
  \begin{matrix}n+j\\n-j-1 \end{matrix}
   \right]  q^{{n-j} \choose{2}}  q^{ n+j+1} (q;q^2)_j 
   \right) \\
   &=&\frac{1}{1-q} \left(-U_{n+1}+q^{2n}U_n \right).
   \end{eqnarray*}
Substituting $U_n$ from Proposition \ref{Un} gives the desired equality for $\alpha_n$.
\end{proof}
\section{Proofs of the Main Theorems} \label{ProofSection}

\begin{proof}[Proof of Theorem \ref{16Theorem}]

 The main step in the proof of Theorem \ref{16Theorem} is to rewrite $f_1$ as a Hecke-type sum, which relies on the  Bailey pair obtained in Theorem \ref{BaileyTheorem} (2).
  \begin{proposition}  \label{Hecke1}
We have  
$$
  f_1(q)= \sum_{ \substack{n \geq 0 \\ -n-1 \leq j \leq n  }} 
  q^{4n^2+5n+1-2 j^2-2j} \left(1+q^{ 6n+6} \right)
  + \sum_{ \substack{n \geq 0 \\ -n \leq j \leq n  }} 
  q^{4n^2+n-2 j^2} \left(1+q^{ 6n+3} \right).
  $$
  \end{proposition}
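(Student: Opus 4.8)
The plan is to obtain the Hecke-type expansion for $f_1$ by feeding the Bailey pair relative to $a=q$ from Theorem~\ref{BaileyTheorem}(2) into the limiting Bailey Lemma (Lemma~\ref{BaileyLemma}) with a suitable choice of the free parameters $\rho_1,\rho_2$. The choice that produces precisely $f_1$ is $\rho_2=q$ together with $\rho_1\to\infty$. Both sides of Lemma~\ref{BaileyLemma} converge absolutely for $|q|<1$ in this regime (the $\alpha_n$ grow only like $q^{2n^2}$), so passing to the limit is legitimate; in it one has $(\rho_1)_n(aq/\rho_1\rho_2)^n\to(-1)^nq^{\binom{n}{2}+n}$, while $(aq/\rho_1)_n\to1$ and $(aq/\rho_1\rho_2)_\infty\to1$, so the prefactor degenerates to $(q^2;q)_\infty/(q;q)_\infty=1/(1-q)$. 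Hence Lemma~\ref{BaileyLemma} reduces to
\[
\sum_{n\ge0}(-1)^nq^{\frac{n^2+n}{2}}\alpha_n=\frac{1}{1-q}\sum_{n\ge0}q^{\frac{n^2+n}{2}}\,\frac{(q)_n(q;q^2)_n}{(q)_{2n+1}}.
\]

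Next I would recognize the right-hand side as $f_1$: splitting $(q)_{2n}$ into its odd and even factors gives $(q)_{2n}=(q;q^2)_n(q^2;q^2)_n$, and $(q^2;q^2)_n=(q)_n(-q)_n$, hence $(q)_n(q;q^2)_n=(q)_{2n}/(-q)_n$ and the sum on the right is exactly $f_1(q)$. This yields
\[
f_1(q)=(1-q)\sum_{n\ge0}(-1)^nq^{\frac{n^2+n}{2}}\alpha_n,
\]
where the factor $1-q$ cancels against the $1/(1-q)$ present in both $\alpha_{2n}$ and $\alpha_{2n+1}$ in Theorem~\ref{BaileyTheorem}(2).

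It then remains to substitute the explicit values of $\alpha_n$, separating the sum over $n$ according to parity. Using $\binom{2m}{2}+2m=2m^2+m$ and $\binom{2m+1}{2}+(2m+1)=2m^2+3m+1$ and collecting exponents, the right-hand side becomes a sum of four double series,
\[
q^{4m^2+m}\!\sum_{|j|\le m}q^{-2j^2},\quad q^{4m^2+7m+3}\!\sum_{|j|\le m}q^{-2j^2},\quad q^{4m^2+5m+1}\!\!\sum_{-m-1\le j\le m}\!\!q^{-2j^2-2j},\quad q^{4m^2+3m}\!\!\sum_{-m\le j\le m-1}\!\!q^{-2j^2-2j}.
\]
On the other hand, expanding the products $1+q^{6n+3}$ and $1+q^{6n+6}$ in the claimed formula produces four double series as well: the first three of these coincide term for term with the first three above, while the fourth, $q^{4n^2+11n+7}\sum_{-n-1\le j\le n}q^{-2j^2-2j}$ (the contribution of the $q^{6n+6}$ summand), matches the last of the four above after the shift $n\mapsto n-1$; note that the $m=0$ term of that last series is empty, consistent with the summation range $n\ge0$ on the other side. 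Assembling the pieces gives the proposition. The only genuinely delicate point is the exponent bookkeeping together with this single reindexing; once the specialization $\rho_2=q$, $\rho_1\to\infty$ is identified, the rest is essentially forced.
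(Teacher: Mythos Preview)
Your argument is correct and follows exactly the route the paper takes: specialize Lemma~\ref{BaileyLemma} with $a=q$, $\rho_2=q$, $\rho_1\to\infty$ using the Bailey pair of Theorem~\ref{BaileyTheorem}(2), identify the $\beta$-side with $\tfrac{1}{1-q}f_1(q)$ via $(q)_n(q;q^2)_n=(q)_{2n}/(-q)_n$, and then split the $\alpha$-side by parity of $n$. Your added exponent bookkeeping and the reindexing $n\mapsto n-1$ for the $q^{6n+6}$ piece are exactly what the paper suppresses when it writes ``plugging in $\alpha_n$ \dots yields Proposition~\ref{Hecke1}.''
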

  \begin{proof}
  We use the Bailey pair from Theorem  \ref{BaileyTheorem} (2)   in 
  Bailey's Lemma with with $\rho_1 \to \infty$ and $\rho_2=q$.
  Using the fact  that 
  $$
  \lim_{\rho \to \infty} \frac{(\rho)_n}{\rho^n} 
  = (-1)^n\, q^{\frac{n(n+1)}{2}},
  $$
the ``$\beta$-side'' of Bailey's Lemma equals
   $$
\frac{1}{1-q} \sum_{n=0}^{\infty}  \frac{
   (q;q^2)_n (q)_n 
    q^{\frac{n(n+1)}{2} }
   }{ (q)_{2n+1} }
  =\frac{1}{1-q} \sum_{n=0}^{\infty} \frac{(q)_{2n}}{(-q)_n(q)_{2n+1}} \, q^{\frac{n(n+1)}{2} } 
  = \frac{1}{1-q}f_1(q).
   $$
The ``$\alpha$-side'' equals
$$
   \sum_{n =0}^{\infty} 
    q^{n(2n+1)}\alpha_{2n} 
   - \sum_{n=0}^{\infty} 
    q^{(2n+1)(n+1) } \alpha_{2n+1}.
$$
Plugging in $\alpha_n$ from Theorem \ref{BaileyTheorem} (2)  and multiplying by $1-q$ yields Proposition \ref{Hecke1}.
  \end{proof}
  To finish the proof of  Theorem \ref{16Theorem}, we first observe that 
  Propositon \ref{Hecke1} yields 
\begin{multline}\label{compsq1}
qf_1(q^{16}) = \sum_{\substack{n\geq 0\\ -n-1\leq j \leq n}} 
\left(q^{(8n+5)^2 - 2(4j+2)^2} + q^{(8n+11)^2 - 2(4j+2)^2} \right) \\
+ \sum_{\substack{n\geq 0\\ |j| \leq n}} \left(q^{(8n+1)^2 - 2(4j)^2} + q^{(8n+7)^2 - 2(4j)^2}\right).
\end{multline}
We next  use Lemma 3 of \cite{ADH}  and unique factorization in $O_K$ to rewrite each ideal $\mathbf{a}$ occurring in equation (\ref{2mod16eqn}) uniquely as 
 $\mathbf{a}=(u+v\sqrt{2})$  with $u> 0$ and $-\frac{1}{2} u < v\leq \frac{1}{2} u$.  
The congruence condition $\Nrm(\mathbf{a})= u^2-2v^2\equiv 1\pmod{16}$ corresponds to the four summands occurring in equation (\ref{compsq1}).  This completes the proof of Theorem \ref{16Theorem}.
  \end{proof} 
  \begin{proof}[Proof of Theorem \ref{16Companion}]  
\begin{proposition}\label{Hecke2}
We have 
$$
f_2(q) =  \sum_{\substack{n\geq 1 \\ -n \leq j \leq n-1  } } \left(1+q^{2n}\right) q^{4n^2-n-2j^2-2j} 
 + \sum_{\substack{n\geq 0\\ |j|\leq n}} q^{4n^2+3n+1-2j^2}\left(1+q^{2n+1}\right).
$$
\end{proposition}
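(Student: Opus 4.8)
The plan is to rewrite $f_2$ as a Hecke-type sum by bringing in the Bailey pair of Theorem \ref{BaileyTheorem} (1) (relative to $a=1$), just as $f_1$ was analyzed via the pair in part (2); here, however, it is cleaner to use the defining relation of the Bailey pair directly, together with the $q$-Gauss summation (\ref{Heine}), rather than Bailey's Lemma. The first step is to recognize $f_2$ as a weighted sum over the $b_n$. Splitting the $q$-Pochhammer symbol into odd- and even-indexed factors,
$$(q)_{2n-2}=(q;q^2)_{n-1}\,(q^2;q^2)_{n-1}=(q;q^2)_{n-1}\,(q)_{n-1}\,(-q)_{n-1},$$
so that $\frac{(q)_{2n-2}}{(-q)_{n-1}}=(q)_{n-1}\,(q;q^2)_{n-1}$, I would rewrite
$$f_2(q)=\sum_{n\geq1}q^{\frac{n^2+n}{2}}(q)_{n-1}\,\frac{(q;q^2)_{n-1}}{(q)_{2n-1}}=\sum_{n\geq1}(-1)^n q^{\frac{n^2+n}{2}}(q)_{n-1}\,b_n.$$

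Next I would insert the defining relation $b_n=\sum_{r=0}^{n}\frac{a_r}{(q)_{n-r}(q)_{n+r}}$ and interchange the two summations (legitimate for $|q|<1$); since $a_0=0$ the term $r=0$ drops, giving
$$f_2(q)=\sum_{r\geq1}a_r\sum_{n\geq r}\frac{(-1)^n q^{\frac{n^2+n}{2}}(q)_{n-1}}{(q)_{n-r}(q)_{n+r}}.$$
To evaluate the inner sum I would substitute $n=r+k$, use $(q)_{r+k-1}=(q)_{r-1}(q^r;q)_k$ and $(q)_{2r+k}=(q)_{2r}(q^{2r+1};q)_k$, and split the exponent via $\binom{r+k+1}{2}=\binom{r+1}{2}+\binom{k+1}{2}+rk$, reducing it to
$$(-1)^r q^{\binom{r+1}{2}}\,\frac{(q)_{r-1}}{(q)_{2r}}\sum_{k\geq0}\frac{(-1)^k q^{\binom{k+1}{2}}q^{rk}(q^r;q)_k}{(q)_k(q^{2r+1};q)_k}.$$

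Now I would write $(-1)^k q^{\binom{k+1}{2}}=q^k\lim_{\rho\to\infty}\frac{(\rho;q)_k}{\rho^k}$, turning the remaining $k$-sum into a limit of the left-hand side of (\ref{Heine}) with $a=\rho$, $b=q^r$, $c=q^{2r+1}$ (note $\frac{c}{ab}=\frac{q^{r+1}}{\rho}$); applying (\ref{Heine}) and sending $\rho\to\infty$ yields $\frac{(q^{r+1})_\infty}{(q^{2r+1})_\infty}=\frac{(q)_{2r}}{(q)_r}$, so that the inner sum equals $\frac{(-1)^r q^{(r^2+r)/2}}{1-q^r}$ and we arrive at
$$f_2(q)=\sum_{r\geq1}(-1)^r q^{\frac{r^2+r}{2}}\,\frac{a_r}{1-q^r}.$$
It remains to split by the parity of $r$ and substitute the explicit values of $a_r$ from Theorem \ref{BaileyTheorem} (1): from $\frac{a_{2n}}{1-q^{2n}}=(1+q^{2n})q^{2n^2-2n}\sum_{-n\leq j\leq n-1}q^{-2j^2-2j}$, $\frac{a_{2n+1}}{1-q^{2n+1}}=-(1+q^{2n+1})q^{2n^2}\sum_{|j|\leq n}q^{-2j^2}$, and $\frac{r^2+r}{2}=2n^2+n$ for $r=2n$, $\frac{r^2+r}{2}=2n^2+3n+1$ for $r=2n+1$, a routine reindexing collapses everything into the two sums displayed in Proposition \ref{Hecke2}.

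I expect the inner-sum evaluation to be the main obstacle: one has to recognize it, after peeling off the Gaussian factor $(-1)^k q^{\binom{k+1}{2}}$ as a limit of a quotient of $q$-Pochhammer symbols, as an instance of the $q$-Gauss summation (\ref{Heine}), and then justify passing the limit inside the absolutely convergent series. After that, the remaining work is bookkeeping with $q$-Pochhammer symbols, entirely parallel to the proof of Proposition \ref{Hecke1}.
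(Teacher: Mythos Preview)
Your argument is correct and lands on exactly the same intermediate identity as the paper,
\[
f_2(q)=\sum_{r\geq 1}\frac{(-1)^r q^{\frac{r^2+r}{2}}}{1-q^r}\,a_r,
\]
after which the parity split and substitution of $a_{2n}$, $a_{2n+1}$ from Theorem~\ref{BaileyTheorem}(1) is identical to what the paper does. The only difference is in how you reach this identity: the paper applies Bailey's Lemma (Lemma~\ref{BaileyLemma}) with $a=1$, $\rho_1\to\infty$, dividing by $1-\rho_2$ and sending $\rho_2\to 1$, whereas you unfold the defining relation $b_n=\sum_r a_r/((q)_{n-r}(q)_{n+r})$, interchange, and evaluate the resulting inner sum by the $q$-Gauss summation~(\ref{Heine}). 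In effect you are reproving, from scratch, precisely this limiting case of Bailey's Lemma; the $q$-Gauss step is the computation hidden inside the lemma. So the two routes are equivalent in content: the paper's is shorter because it quotes the lemma, while yours is self-contained and makes transparent exactly which hypergeometric identity is doing the work.
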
 
\begin{proof}  
We use  the Bailey pair from Theorem  \ref{BaileyTheorem}  (1)
in  Bailey's Lemma with     $\rho_1\to\infty$.  
We divide both sides by $1-\rho_2$,  and then let $\rho_2\to 1$. 
After this  the  ``$\beta$-side''   equals 
\begin{equation*}
\sum_{n\geq 1} \frac{(q)_{n-1}(q;q^2)_{n-1}}{(q)_{2n-1}}q^{\frac{n^2+n}{2}} =   \sum_{n\geq 1} \frac{(q)_{2n-2}}{(-q)_{n-1}(q)_{2n-1}}q^{\frac{n^2+n}{2}}=f_2(q). 
\end{equation*}
On the ``$\alpha$-side'', we have 
$$ 
\sum_{n= 1}^{\infty} \frac{(-1)^n q^{\frac{n^2+n}{2}}  }{1-q^n}   a_n   
=   \sum_{n=1}^{\infty}
 q^{2n^2+n}(1+q^{2n}) U_{2n} - \sum_{n=0}^{\infty}
  q^{2n^2+3n+1}(1+q^{2n+1})U_{2n+1}.  
$$
Inserting Proposition \ref{Un} proves Proposition \ref{Hecke2}.
\end{proof} 
Proposition \ref{Hecke2} gives  
\begin{multline} \label{Hecke2new}
q^{-7}f_2 \left(q^{16} \right) 
= \sum_{\substack{n\geq 1\\ -n\leq j\leq n-1}}  \left(q^{(8n-1)^2-2(4j+2)^2} + q^{(8n+1)^2 -2(4j+2)^2}\right)   \\
+ \sum_{\substack{n\geq 0\\ |j|\leq n}} \left(q^{(8n+3)^2-2(4j)^2} + q^{(8n+5)^2-2(4j)^2}\right).
\end{multline} 
We write $\textbf{a}$ in   (\ref{2mod16comp}) as in the proof of Theorem  \ref{16Theorem}. 
  The condition $\mathcal{N}(\textbf{a}) \equiv 9 \pmod{16}$ 
   translates in the four  summands of  
  (\ref{Hecke2new}), proving  Theorem \ref{16Companion}.


\end{proof}

\begin{proof}[Proof of  Theorem \ref{2twistthm}]
\begin{proposition} 
We have
\begin{equation}\label{fHecke}
f_3(q)=\sum_{\substack{n\geq 0\\ |j|\leq n}} (-1)^j q^{2n^2+2n-j^2}.
\end{equation}
 \end{proposition}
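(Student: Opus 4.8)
The Proposition is a $q$-hypergeometric-to-Hecke identity of exactly the type of Propositions \ref{Hecke1} and \ref{Hecke2}, and I would prove it by the same mechanism: realise $f_3(q)$, up to an elementary factor, as the $\beta$-side of a limiting case of Bailey's Lemma (Lemma \ref{BaileyLemma}) attached to a suitable Bailey pair, and then read the double sum $\sum_{n\ge 0,\,|j|\le n}(-1)^j q^{2n^2+2n-j^2}$ off the matching $\alpha$-side. The input pair here is a classical one — in contrast with Theorems \ref{16Theorem} and \ref{16Companion}, no new pair is needed — of the shape that produces the summand $\frac{(q)_{2n}}{(-q)_{2n+1}}q^n$ after specialization; concretely one wants a pair relative to $a=1$ whose $\beta$-sequence is a multiple of $\frac{(q)_{2n}}{(q)_n(-q)_{2n+1}}$ (this is of the admissible form $\beta_n=\sum_{r}\alpha_r/((q)_{n-r}(q)_{n+r})$), so that its $\alpha$-sequence is pinned down by Bailey inversion (\ref{BaileyInversion}).

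The computational heart is then to evaluate that Bailey-inverted $\alpha_n$ in closed form and recognise it as a false-theta-type expression $q^{cn^2}\sum_{|j|\le n}q^{-dj^2}$ together with a companion piece with $q^{-d j(j+1)}$. I would do this precisely as in Proposition \ref{Un}: guess the closed form from small $n$ and verify it by showing both sides obey the same two-term recurrence in $n$ with the same initial values, the recurrence for the explicit $\alpha_n$ being extracted — as there — from Heine's transformation (\ref{Heine}) and the Pochhammer reflection identities used in Section \ref{NewBailey}. With $\alpha_n$ known, one applies Bailey's Lemma with $\rho_1\to\infty$ and $\rho_2$ a suitable power of $q$ (or $\rho_2\to 1$ after dividing by $1-\rho_2$, as in the proof of Proposition \ref{Hecke2}): the $\beta$-side collapses to $\sum_{n\ge 0}\frac{(q)_{2n}}{(-q)_{2n+1}}q^n=f_3(q)$ and the prefactor $\frac{(aq,\,aq/\rho_1\rho_2)_\infty}{(aq/\rho_1,\,aq/\rho_2)_\infty}$ degenerates to an elementary factor. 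Substituting $\alpha_n$ into the $\alpha$-side and separating $n$ into even and odd parts leaves two theta pieces of the form $q^{2m^2}\sum_{|j|\le m}q^{-2j^2}$ and $q^{2m^2+2m}\sum_{-m\le j\le m-1}q^{-2j^2-2j}$; reindexing the inner variable by $j\mapsto 2j$ and $j\mapsto 2j+1$ respectively merges them into a single sum over all integers of bounded size, which — after the rescaling that halves the quadratic form — produces exactly the alternating factor $(-1)^j$ and the exponent $2n^2+2n-j^2$. (As a fallback one can instead argue directly: use $\frac{(q)_{2n}}{(-q)_{2n+1}}=\frac{(q^2;q^2)_n^2}{(q^2;q^2)_{2n}(1+q^{2n+1})}\,(q;q^2)_n^2$, insert the finite Gauss identity $(q;q^2)_n^2=\sum_{|j|\le n}(-1)^j q^{j^2}\frac{(q^2;q^2)_{2n}}{(q^2;q^2)_{n+j}(q^2;q^2)_{n-j}}$, interchange the order of summation, and sum the resulting geometric-type series in $n$; this is more elementary but needs an extra transformation to reconcile the $q^{j^2}$ here with the $q^{-j^2}$ in (\ref{fHecke}).)

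The main obstacle is this last bookkeeping step, together with pinning down the correct classical Bailey pair and specialization so that the $\beta$-side is \emph{exactly} $f_3$: the closed form of the Bailey-inverted $\alpha_n$ is the analogue of Proposition \ref{Un} for that pair, and it is the place where the signs must be tracked carefully. Once (\ref{fHecke}) is established, Theorem \ref{2twistthm} follows from it by the ideal-theoretic argument of Theorem \ref{16Theorem}: replacing $q$ by $q^2$ and multiplying by $q$ turns the exponent into $(2n+1)^2-2j^2$; Lemma 3 of \cite{ADH} together with unique factorization in $O_K=\Z[\sqrt 2]$ writes every ideal of odd norm uniquely as $(u+v\sqrt 2)$ with $u>0$ and $-\tfrac12 u<v\le\tfrac12 u$, and with $u=2n+1$, $v=j$ one has $\leg{-4}{u^2-2v^2}=(-1)^v$ since $u^2-2v^2\equiv 1$ or $7\pmod 8$ according as $v$ is even or odd, while the even-norm ideals contribute nothing because $\leg{-4}{\text{even}}=0$.
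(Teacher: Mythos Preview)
Your overall framework is right---this is a Bailey-pair argument---but the paper takes a shorter path than the one you outline. Rather than choosing a $\beta$-sequence tailored to $f_3$, computing $\alpha_n$ by Bailey inversion, and then certifying a closed form via a recurrence in the style of Proposition~\ref{Un}, the paper invokes a \emph{pre-packaged} pair in which the $\alpha$-side is already a Hecke-type sum. Concretely, it quotes the Andrews--Hickerson pair (Theorem~\ref{AndrewBailey}) with $q\to q^2$, $a=q^2$, $b=-q$, $c=-1$, so that $B_n'=\dfrac{1}{(-q^2,-q^3;q^2)_n}$ and $A_n'$ already carries the inner sum $\sum_{j}(-1)^j q^{-j^2}$. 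The specialization in Bailey's Lemma is $\rho_1=q$, $\rho_2=q^2$ (both finite): the $\beta$-side becomes $\frac{1}{1+q}\sum_{n\ge 0}\frac{(q,q^2;q^2)_n}{(-q^2,-q^3;q^2)_n}q^n=f_3(q)$, and the $\alpha$-side is read off directly as $\sum_{n\ge 0}q^{2n^2+2n}\bigl(1+2\sum_{j=1}^n(-1)^j q^{-j^2}\bigr)$, which is (\ref{fHecke}).

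Two points where your sketch diverges from this. First, the pair is relative to $a=q^2$ (in base $q^2$), not $a=1$. Second, taking $\rho_1\to\infty$ would insert a factor $(-1)^n q^{\binom{n}{2}}$ on the $\beta$-side, whereas the summand of $f_3$ carries only a linear power $q^n$; the correct move here is to keep both $\rho_i$ finite so that no quadratic exponent appears. Your route via inversion and a $U_n$-type recurrence is not wrong in principle, but it recreates from scratch what the Andrews--Hickerson pair already supplies; the paper's choice is the efficient one precisely because the Hecke shape of $A_n'$ is built in. Your deduction of Theorem~\ref{2twistthm} from (\ref{fHecke}) is correct and matches the paper.
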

\begin{proof}
For the proof of Proposition \ref{fHecke} we recall the following Bailey pair  from Theorem 2.3 of \cite{AH}.  
\begin{theorem} \label{AndrewBailey}
The following $A_n'$ and $B_n'$ form a Bailey pair relative to $a$:
$$
A_n'= A_n'(q):=
\frac{q^{n^2} (bc)^n\left(1-aq^{2n} \right)\left( a/b,a/c\right)_n}{(1-a)(bq,cq)_n}
\sum_{j=0}^n\frac{(-1)^j\, \left( 1-aq^{2j-1}\right)(a)_{j-1}(b,c)_j}{q^{j(j-1)/2} (bc)^j(q,a/b,a/c)_j} 
$$
and 
$$
B_n'=B_n'(q):=\frac{1}{(bq,cq)_n}.
$$
\end{theorem}
We use the Bailey pair from Theorem \ref{AndrewBailey} 
with 
$q\to q^2$, $b=-q$, $c=-1$, and $a=q^2$ in  Bailey's Lemma with $\rho_1=q$ and  $\rho_2=q^2$.  
The ``$\beta$-side''   equals
$$
\frac{1}{1+q}
\sum_{n=0}^{\infty} \frac{(q,q^2;q^2)_n q^n}{(-q^3,-q^2;q^2)_n}       
=  \sum_{n=0}^{\infty} \frac{(q)_{2n}}{(-q)_{2n+1}}q^n=f_3(q).
$$  
The ``$\alpha$-side'' is
$$
\sum_{n =0}^{\infty}  q^{ 2n^2+2n} 
\left(1+2\sum_{j=1}^n (-1)^jq^{-j^2} \right),
$$
which is the right-hand side of  (\ref{fHecke}).  
\end{proof}
Using Proposition \ref{fHecke}, we get  
\begin{equation*}
qf_3(q^2)  
= \sum_{\substack{n\geq 0\\ |j|\leq n}} (-1)^j q^{(2n+1)^2-2j^2}.
\end{equation*}
We write each ideal $\mathbf{a}$ as 
$\mathbf{a}=(u+v\sqrt{2})$  as in the proof of Theorem \ref{16Theorem}.
Note that the Kronecker symbol implies that only elements with $u$ odd survive 
and that  
$(-1)^v = \leg{-4}{\Nrm (\textbf{a})}$.   
This finishes the proof of Theorem  \ref{2twistthm}.
\end{proof}
\begin{proof}[Proof of Theorem \ref{2mod1thm}]
\begin{proposition} \label{Hecke4}
We have 
\begin{equation*}
f_4(q)= -
\sum_{\substack{n\geq 0 \\ -n-1 \leq j \leq n  }} (-1)^j 
q^{2n^2+4n+2 -j^2}  .
\end{equation*}
\end{proposition}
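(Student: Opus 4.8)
The plan is to prove Proposition \ref{Hecke4} in exactly the way the Hecke-type representation \eqref{fHecke} of $f_3$ was obtained in the proof of Theorem \ref{2twistthm}: I would specialize the Andrews--Hickerson Bailey pair of Theorem \ref{AndrewBailey}, feed it into Bailey's Lemma (Lemma \ref{BaileyLemma}), identify the ``$\beta$-side'' with a scalar multiple of $f_4$, and recognize the ``$\alpha$-side'' as the claimed double sum.

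Concretely, I would use the pair $(A_n',B_n')$ of Theorem \ref{AndrewBailey} with the substitution $q\mapsto q^2$ and parameters $a=q^4$, $b=-q$, $c=-q^2$, applied in Lemma \ref{BaileyLemma} (in base $q^2$) with $\rho_1=q^3$ and $\rho_2=q^2$, so that $aq^2/(\rho_1\rho_2)=q$. Here $B_n'=1/(-q^3,-q^4;q^2)_n=(1+q)(1+q^2)/(-q)_{2n+2}$ has precisely the shape of the summand of $f_4$, while $(\rho_1,\rho_2;q^2)_n=(q)_{2n+1}/(1-q)$ and the infinite-product prefactor $(q^6,q;q^2)_\infty/(q^3,q^4;q^2)_\infty$ simplifies to $1/(1+q)(1+q^2)$; all the $(1\pm q^k)$ factors then cancel and the ``$\beta$-side'' becomes $f_4(q)/\bigl(q(1-q)\bigr)$.

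For the ``$\alpha$-side'', which is $\sum_{n\ge0}\tfrac{1-q^2}{1-q^{2n+2}}q^n A_n'$, I would first simplify the $n$-dependent prefactor of $A_n'$ using $1-q^{4n+4}=(1-q^{2n+2})(1+q^{2n+2})$ and $(-q^2;q^2)_n/(-q^4;q^2)_n=(1+q^2)/(1+q^{2n+2})$; this collapses the prefactor so that $\tfrac{1-q^2}{1-q^{2n+2}}q^nA_n'=q^{2n^2+4n}\sum_{j=0}^n T_j$, where $T_j$ is the inner summand of $A_n'$. Using $(q^4;q^2)_{j-1}=(q^2;q^2)_j/(1-q^2)$ (with $(q^4;q^2)_{-1}:=1/(1-q^2)$), $(-q;q^2)_j/(-q^3;q^2)_j=(1+q)/(1+q^{2j+1})$, and $1-q^{4j+2}=(1-q^{2j+1})(1+q^{2j+1})$, the term $T_j$ telescopes down to $\tfrac{(-1)^j(1-q^{2j+1})}{(1-q)\,q^{j^2+2j}}$, so the ``$\alpha$-side'' equals $\tfrac{1}{1-q}\sum_{n\ge0}q^{2n^2+4n}\sum_{j=0}^n(-1)^j\bigl(q^{-(j^2+2j)}-q^{1-j^2}\bigr)$. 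Finally, writing $j^2+2j=(j+1)^2-1$ and shifting $j\mapsto j+1$ in the first piece folds the inner $j$-sum into $-q\sum_{-n-1\le j\le n}(-1)^jq^{-j^2}$; equating the two sides and multiplying through by $q(1-q)$ yields Proposition \ref{Hecke4}.

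The main obstacle is guessing the correct specialization --- namely $a=q^4$, $b=-q$, $c=-q^2$, $\rho_1=q^3$, $\rho_2=q^2$ --- so that the ``$\beta$-side'' is genuinely a scalar multiple of $f_4$, and then verifying that the numerous $(1\pm q^k)$ factors arising from $B_n'$, the $(\rho_i;q^2)_n$, the infinite-product quotient and the prefactor of $A_n'$ all cancel against one another. Once the parameters are pinned down, the collapse of the inner $j$-sum of $A_n'$ and the reindexing are routine, mirroring the treatment of $f_3$; one should also remark (as in Lemma \ref{BaileyLemma}) that both sides converge absolutely for $|q|<1$.
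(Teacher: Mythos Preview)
Your proposal is correct and is essentially identical to the paper's own proof: the paper also specializes Theorem \ref{AndrewBailey} with $q\mapsto q^2$, $a=q^4$, $b=-q$, $c=-q^2$ and applies Bailey's Lemma with $\rho_1=q^2$, $\rho_2=q^3$ (the swap of $\rho_1,\rho_2$ is immaterial), obtaining the $\beta$-side $q^{-1}f_4(q)/(1-q)$ and reducing the $\alpha$-side to $-\tfrac{q}{1-q}\sum_{n\ge 0}q^{2n^2+4n}\sum_{-n-1\le j\le n}(-1)^j q^{-j^2}$ before multiplying by $q(1-q)$. Your write-up simply makes the intermediate cancellations and the reindexing $j\mapsto j+1$ more explicit than the paper does.
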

\begin{proof}
We use  the Bailey pair from 
 Theorem \ref{AndrewBailey} with  
$q\to q^2$, $b=-q$, $c=-q^2$, and $a=q^4$ in Bailey's Lemma with $\rho_1=q^2$ and  $\rho_2=q^3$.  
The ``$\beta$-side''  then equals 
\begin{equation*}
 \frac{1-q}{1-q^4}
 \sum_{n=0}^{\infty} 
  \frac{(q^2,q^3;q^2)_n}{(-q^3,-q^4;q^2)_n}q^n 
 =   
    \frac{1}{1-q}\sum_{n=0}^{\infty}  \frac{(q)_{2n+1}}{(-q)_{2n+2}}q^n =\frac{q^{-1}}{1-q} f_4(q).
\end{equation*}
The ``$\alpha$-side" equals    
\begin{equation*}  
\sum_{n=0}^{\infty}  q^{2n^2+4n }  \left( 1+ \frac{q}{1-q}\sum_{j=1}^{n} (-1)^j (q^{-j^2-2j-1}-q^{-j^2}) \right) 
 = - \frac{q}{1-q}  
 \sum_{n=0}^{\infty}  q^{2n^2+4n }
 \sum_{j=-n-1}^{n} (-1)^{j} q^{-j^2}.
\end{equation*}
Multiplying with $q(1-q)$ gives Proposition \ref{Hecke4}.  
\end{proof}
By Proposition \ref{Hecke4}, we have 
$$
-f_4(q^2)  
= 
\sum_{\substack{n\geq 0\\-n-1 \leq j \leq n  }}  (-1)^jq^{(2n+2)^2-2j^2}.
$$
Using Lemma 3 of \cite{ADH},  we
  write each ideal as $\textbf{a}=(u + \sqrt{2}v)$ with $\Nrm(\textbf{a})=2v^2-u^2$ with $v>0$ and $-v<u \leq v$ .
As before this yields  
\begin{equation*}
f_4(q^2)=-
\sum_{\substack{  \textbf{a} \subset O_K \\ 2| \mathcal{N}(\textbf{a})   }} 
(-1)^{\frac{\Nrm(\textbf{a})}{2}} q^{\Nrm(\textbf{a})}.
\end{equation*}
To finish the proof, we note that there is a  unique ideal of norm $2$, namely $(\sqrt{2})$ of $O_K$     which gives 
Theorem \ref{2mod1thm}.  
\end{proof}
\begin{proof}[Proof of Theorem \ref{3mod4thm}] 
\begin{proposition} \label{Hecke5}
We have 
$$
f_5(q) = \sum_{\substack{n\geq 0\\ |j|\leq n}} q^{\frac{3(n^2+n)}{2} - \frac{j^2+j}{2}}.
$$
\end{proposition}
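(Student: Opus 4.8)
The plan is to derive Proposition \ref{Hecke5} in exactly the same style as the previous Hecke-type expansions in this section, namely by feeding a suitable known Bailey pair into the limiting form of Bailey's Lemma (Lemma \ref{BaileyLemma}) with an appropriate specialization of $\rho_1,\rho_2$, matching the $\beta$-side with $f_5$ and reading off the $\alpha$-side as the claimed double sum. First I would look for a Bailey pair relative to some $a$ whose $\beta_n$ is essentially $\frac{(q)_n}{(q;q^2)_{n+1}}$ up to a power of $q$ and a constant; the natural candidate is again the Andrews--Hickerson pair from Theorem \ref{AndrewBailey} (with $B_n'=\frac{1}{(bq,cq)_n}$), since a base change $q\to q^2$ together with a choice like $b=q$, $c=q^2$ (or $b,c$ equal to small powers of $q$) turns $(bq,cq;q^2)_n$ into a product of the form $(q^2;q^2)_n(q^3;q^2)_n$ or $(q;q^2)_{n+1}$-type factors. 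The sign $(-1)^n$ in $f_5$ should come from the $\left(\frac{aq}{\rho_1\rho_2}\right)^n$ factor in Bailey's Lemma being a negative power of $q$ times $(-1)^n$ after the $\rho_i$ are specialized to powers of $q$, exactly as in the proof of Proposition \ref{fHecke} where $\rho_1=q$, $\rho_2=q^2$.

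Concretely, I would: (i) fix the parameters $q\to q^2$, $a$, $b$, $c$ in Theorem \ref{AndrewBailey} and the parameters $\rho_1,\rho_2$ in Lemma \ref{BaileyLemma} so that the $\beta$-side collapses, using $\frac{(q)_n}{(-q)_n}=\frac{(q)_{2n}}{(-q)_n^2}$-type simplifications and the identity $\frac{(q^2;q^2)_n}{(q;q^2)_{n+1}}=\frac{(q)_{2n}}{(q;q^2)_{n+1}(q;q^2)_n}$ (analogous to the rewriting $\frac{(q,q^2;q^2)_n}{(-q^3,-q^2;q^2)_n}q^n = \frac{(q)_{2n}}{(-q)_{2n+1}}q^n$ used earlier), to recognize a scalar multiple of $f_5(q)$; (ii) on the $\alpha$-side, substitute the explicit $A_n'$ from Theorem \ref{AndrewBailey}, which is a single-variable $q$-power times an inner finite sum $\sum_{j}(-1)^j(\cdots)$; the prefactor $q^{n^2}(bc)^n(1-aq^{2n})(a/b,a/c)_n/((1-a)(bq,cq)_n)$ should telescope or simplify to something proportional to $q^{\frac{3n^2+3n}{2}}$ after the chosen specialization, and the inner sum over $j$ should telescope into $\sum_{|j|\le n}q^{-\frac{j^2+j}{2}}$, yielding $\sum_{n\ge0}q^{\frac{3(n^2+n)}{2}}\sum_{|j|\le n}q^{-\frac{j^2+j}{2}}=\sum_{n\ge0,\,|j|\le n}q^{\frac{3(n^2+n)}{2}-\frac{j^2+j}{2}}$; (iii) equate the two sides and cancel the common scalar to obtain Proposition \ref{Hecke5}.

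If the Andrews--Hickerson pair does not directly give the right $\beta_n$, the fallback is to produce a fresh Bailey pair relative to $a=1$ by Bailey inversion (\ref{BaileyInversion}) applied to $\beta_n=\frac{(q)_n}{(q;q^2)_{n+1}}$ (adjusting for the $(-1)^n$ by absorbing it later through the $\left(\frac{aq}{\rho_1\rho_2}\right)^n$ factor), compute the resulting $\alpha_n$ in closed form by an auxiliary telescoping lemma in the spirit of Proposition \ref{Un} — defining an auxiliary sum $W_n$, checking small cases $W_0,W_1$, establishing a recurrence $W_{n+2}=q^{\,?}W_n+(\text{constant})$ via a Heine transformation (\ref{Heine}) — and then plug into Lemma \ref{BaileyLemma} with $\rho_1\to\infty$, $\rho_2=q^{?}$. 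The main obstacle I anticipate is purely computational rather than conceptual: choosing the specialization of $(a,b,c,\rho_1,\rho_2)$ so that \emph{both} the $\beta$-side simplifies exactly to $f_5$ (including the precise factor of $1-q$ or $1+q$ and the $(-1)^n$) \emph{and} the $\alpha$-side's prefactor produces the clean exponent $\frac{3(n^2+n)}{2}$ with the inner $j$-sum telescoping to the symmetric range $|j|\le n$; getting the powers of $q$ to line up in the split $n\mapsto(2n,2n+1)$ or direct form, and verifying the first few Fourier coefficients $1+q^2+2q^3+q^6+2q^8+\cdots$ against the Hecke sum as a consistency check, is where the real care is needed.
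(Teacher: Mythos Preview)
Your high-level plan—instantiate the Andrews--Hickerson pair of Theorem \ref{AndrewBailey} in Bailey's Lemma and match the two sides—is precisely the paper's route, but the concrete specializations you propose would not produce $f_5$, and the paper's actual choice is both simpler and different in kind from what you suggest. The paper makes \emph{no} base change $q\to q^2$: it takes Theorem \ref{AndrewBailey} at base $q$ with $a=q$, $b=q^{1/2}$, $c=-q^{1/2}$, so that $(bq,cq;q)_n=(q^{3/2},-q^{3/2};q)_n=(q^3;q^2)_n$ and hence $B_n'=1/(q^3;q^2)_n$. In Lemma \ref{BaileyLemma} it then sets $\rho_1=q$ and lets $\rho_2\to\infty$. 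The limit $\lim_{\rho\to\infty}(\rho)_n\bigl(\tfrac{aq}{\rho_1\rho}\bigr)^n=(-1)^n q^{\binom{n}{2}}\cdot q^{n}=(-1)^n q^{\frac{n^2+n}{2}}$ supplies \emph{both} the sign $(-1)^n$ and the quadratic exponent at once; neither can arise from finite $\rho_i$ as in your primary suggestion (and in the proof of Proposition \ref{fHecke} you cite there is in fact no $(-1)^n$ on the $\beta$-side). The infinite-product prefactor becomes $(q^2)_\infty/(q)_\infty=1/(1-q)$, which combines with $(q^3;q^2)_n$ to give the $(q;q^2)_{n+1}$ in the definition of $f_5$. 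On the $\alpha$-side, with $bc=-q$ and $(a/b,a/c)_j=(b,c)_j=(q;q^2)_j$, the inner $j$-sum of $A_n'$ collapses to $1+\sum_{j=1}^n q^{-\frac{j^2+j}{2}}(1+q^{j})=\sum_{|j|\le n}q^{-\frac{j^2+j}{2}}$, and the outer factors together with the $\rho_2\to\infty$ weight yield $q^{\frac{3(n^2+n)}{2}}$. So no fallback to a fresh Bailey pair via inversion is needed; the ``obstacle'' you anticipate is resolved by allowing the half-integer parameters $b=q^{1/2}$, $c=-q^{1/2}$ and sending one $\rho_i$ to infinity, rather than forcing a $q\to q^2$ base change with both $\rho_i$ finite.
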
 
\begin{proof} 
We use Theorem  \ref{AndrewBailey} with    $b=q^{1/2}$, $c=-q^{1/2}$, and $a=q$  in  Bailey's Lemma with   $\rho_1=q$ and $\rho_2\to \infty$. The ``$\beta$-side''   becomes
\begin{eqnarray*}
 \frac{1}{1-q}
  \sum_{n=0}^{\infty}  (-1)^n \frac{(q)_n}{(q^3;q^2)_{n}} q^{\frac{n^2+n}{2}}
  = f_5(q).
\end{eqnarray*}
The ``$\alpha$-side'' equals
$$
\sum_{n=0}^{\infty} 
q^{ \frac{3(n^2+n)}{2} }   
\left(
1+\sum_{j=1}^{n} q^{-\frac{j^2+j}{2}}(1+q^{j})\right),
$$
which is the right-hand side of Proposition \ref{Hecke5}. 
\end{proof}
It follows easily by Proposition \ref{Hecke5} that 
$$
q^2 f_5(q^8) = \sum_{\substack{n\geq 0\\ |j|\leq n}} q^{3 (2n+1)^2-(2j+1)^2}.
$$
Using Lemma 3 of \cite{ADH},  we 
write $\textbf{a}=(u +\sqrt{3}v)$ with $\Nrm(\textbf{a}) = 3v^2-u^2$, $v>0$, and $-v < u \leq v$. 
This yields as before 
$$
q^2 f_5(q^8) = \sum_{\substack{\textbf{a}\subset O_L\\ \Nrm(\textbf{a})\equiv 2\pmod{8}}} q^{\Nrm(\textbf{a})}.
$$
The theorem then follows after dividing by the unique ideal $(1+\sqrt{3})$ of $O_L$ of norm $2$.
\end{proof}
\begin{proof}[Proof of Theorem \ref{3mod4compthm}]  

\begin{proposition}\label{Hecke6}
We have 
$$
f_6(q) = 
-\sum_{\substack{n \geq 0 \\ |j| \leq n} } 
q^{3n^2+3n+1-j^2}
- \sum_{\substack{n \geq 0  \\-n \leq j \leq n-1 }} 
q^{3n^2-j^2-j}.
$$
\end{proposition}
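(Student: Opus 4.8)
The plan is to follow the same template used for Theorems \ref{16Theorem}--\ref{3mod4thm}: first rewrite $f_6$ as a Hecke-type sum via a suitable specialization of the Bailey pair in Theorem \ref{AndrewBailey} plugged into Bailey's Lemma (Lemma \ref{BaileyLemma}), then convert the resulting double sum into a sum over ideals of $O_L$ using Lemma 3 of \cite{ADH} and the congruence bookkeeping. Since $f_6(q)=\sum_{n\ge 1}(-1)^n\frac{(q)_{n-1}(q^2;q^2)_{n-1}}{(q)_{2n-1}}q^n$ and $(q^2;q^2)_{n-1}=(q)_{n-1}(-q)_{n-1}$, the summand is $(-1)^n\frac{(q)_{n-1}(-q)_{n-1}(q)_{n-1}}{(q)_{2n-1}}q^n$; comparing with the shape of the $\beta$-side produced by Theorem \ref{AndrewBailey} (which yields terms $\tfrac{1}{(bq,cq)_n}$ times the Bailey-Lemma factors $(\rho_1,\rho_2)_n(aq/\rho_1\rho_2)^n$), I expect the right choice to be $a=1$ (so we use the $a=1$ version of the lemma, or rather work with $a\to1$ as in the proof of Theorem \ref{16Companion}), with $b,c$ chosen as $\pm q^{1/2}$-type parameters and $\rho_1,\rho_2$ chosen to produce the extra $(-q)_{n-1}$ and the power $q^n$. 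Concretely I would try $q\to q$, $b=q^{1/2}$, $c=-q^{1/2}$ (giving $(bq,cq)_n=(q^3;q^2)_n$ up to reindexing, which produces the $(q)_{2n-1}$ denominator after the shift $n\mapsto n-1$) together with $\rho_1=-q$, $\rho_2\to\infty$, then divide by an appropriate factor of $1-a$ and shift the summation index by one exactly as was done for $f_2$.

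Once the Bailey pair is inserted, the $A_n'$ from Theorem \ref{AndrewBailey} contains the inner sum $\sum_{j}(-1)^j\frac{(1-aq^{2j-1})(a)_{j-1}(b,c)_j}{q^{\binom{j}{2}}(bc)^j(q,a/b,a/c)_j}$; with $b=q^{1/2},c=-q^{1/2}$ and $a$ a power of $q$ this telescopes/collapses to a short theta-like expression, producing after simplification a sum of two pieces of the form $\sum_{n}q^{(\text{quadratic in }n)}\big(1+\cdots\big)$, and completing squares in the inner $j$-sum should yield exactly the two double sums $-\sum_{|j|\le n}q^{3n^2+3n+1-j^2}$ and $-\sum_{-n\le j\le n-1}q^{3n^2-j^2-j}$ claimed in Proposition \ref{Hecke6}. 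The overall sign and the shift $q^{-1}f_6(q^4)$ then come out of the index shift and the factor $q^n\to q^{n}$ under $q\mapsto q^4$.

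For the arithmetic step, Proposition \ref{Hecke6} gives (after $q\mapsto q^4$ and multiplying by $q^{-1}$, i.e. completing squares as $3n^2+3n+1-j^2 = \tfrac{1}{12}\big(3(2n+1)^2-(2j)^2-\text{const}\big)$ type manipulations, matching the $3$ mod $4$ pattern) a sum $\sum q^{3u'^2-v'^2}$ over two residue classes, which by Lemma 3 of \cite{ADH} and unique factorization in $O_L=\Z[\sqrt3]$ — writing $\mathbf a=(u+v\sqrt3)$ with $\Nrm(\mathbf a)=|u^2-3v^2|$ and the fundamental-domain inequalities on $(u,v)$ — is precisely $\sum_{\Nrm(\mathbf a)\equiv 3\ (4)}q^{\Nrm(\mathbf a)}$; the two double sums in Proposition \ref{Hecke6} correspond to the two ways the norm form hits $3\bmod 4$. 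The minus sign in \eqref{3mod4compeqn} matches the $(-1)^n$ already absorbed above. I would also note, as in the proof of Theorem \ref{3mod4thm}, that one divides out a fixed small-norm ideal (here related to norm $1$, i.e. no division, or the factor is absorbed in the $q^{-1}$) to get the exact normalization.

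The main obstacle will be pinning down the precise specialization $(b,c,\rho_1,\rho_2)$ and the index shift that make the $\beta$-side equal $f_6$ exactly (including the leading factor $q^{-1}$ and the global minus sign), since there are several plausible choices and the $(q^2;q^2)_{n-1}=(q)_{n-1}(-q)_{n-1}$ rewriting forces a somewhat delicate matching of an extra $(-q)_{n-1}$ against the $(\rho_1,\rho_2)_n$ factors; a secondary but routine obstacle is carrying the Heine-type collapse of the inner $j$-sum in $A_n'$ through and completing squares to land on exactly the two theta components with the stated ranges $-n\le j\le n-1$ versus $|j|\le n$. Once the Hecke representation is secured, the passage to ideals is entirely parallel to the earlier proofs and presents no new difficulty.
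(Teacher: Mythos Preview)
Your plan has a genuine gap: the Bailey pair you propose is the wrong one. The paper does \emph{not} use Theorem \ref{AndrewBailey} here; it uses the Bailey pair from Lemma~12 of \cite{An} relative to $a=1$, namely $\mathcal{B}_0=0$, $\mathcal{B}_n=\tfrac{1}{(q^n)_n}=\tfrac{(q)_{n-1}}{(q)_{2n-1}}$, with companions
\[
\mathcal{A}_{2n}=-q^{3n^2-2n}(1-q^{4n})\!\!\sum_{j=-n}^{n-1}q^{-j^2-j},\qquad
\mathcal{A}_{2n+1}=q^{3n^2+n}(1-q^{4n+2})\!\!\sum_{|j|\le n}q^{-j^2}.
\]
In Bailey's Lemma one then takes $\rho_1=-1$, divides both sides by $2(1-\rho_2)$, and lets $\rho_2\to 1$; the $\beta$-side becomes
\[
\sum_{n\ge 1}(-1)^n\frac{(-q)_{n-1}(q)_{n-1}^2}{(q)_{2n-1}}\,q^n=f_6(q),
\]
and the $\alpha$-side is $\sum_{n\ge1}\frac{q^{2n}}{1-q^{4n}}\mathcal{A}_{2n}-\sum_{n\ge0}\frac{q^{2n+1}}{1-q^{4n+2}}\mathcal{A}_{2n+1}$, which after inserting $\mathcal{A}_n$ is already the claimed double sum --- no Heine-type collapse of an inner sum is needed.

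Your specialization of Theorem~\ref{AndrewBailey} cannot be made to work for two structural reasons. First, $B_n'=\tfrac{1}{(bq,cq)_n}$ can never produce the denominator $\tfrac{1}{(q^n)_n}$: no fixed $b,c$ yields a $q$-Pochhammer whose starting exponent moves with $n$, and this is exactly what $\tfrac{(q)_{n-1}}{(q)_{2n-1}}$ requires. Second, letting $\rho_2\to\infty$ forces a quadratic exponent $q^{\binom{n}{2}}$ into the $\beta$-side, while $f_6$ carries only the linear factor $q^n$; the correct limit is $\rho_2\to 1$ (after clearing $1-\rho_2$), paired with $\rho_1=-1$ to supply the $(-1)^n(-q)_{n-1}$. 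Your outline of the subsequent passage to ideals via Lemma~3 of \cite{ADH} is fine and matches the paper, but that lies outside Proposition~\ref{Hecke6} itself.
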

\begin{proof}
We take the Bailey pair from Lemma 12 of \cite{An} with respect to $a=1$ with  
\begin{eqnarray*}
\mathcal{B}_0&=&\mathcal{B}_0(q):=0,\\
\mathcal{B}_n&=&\mathcal{B}_n(q):=\frac{1}{(q^n)_n}=\frac{(q)_{n-1}}{(q)_{2n-1}},
\end{eqnarray*} 
and 
\begin{eqnarray}
\label{Aeven}
\mathcal{A}_{2n}&=&\mathcal{A}_{2n}(q) :=-q^{3n^2-2n}(1-q^{4n})\sum_{j=-n}^{n-1}q^{-j^2-j},\\
\label{Aodd}
\mathcal{A}_{2n+1}&=&\mathcal{A}_{2n+1}(q) := q^{3n^2+n}(1-q^{4n+2})\sum_{|j|\leq n} q^{-j^2}.
\end{eqnarray}
  We take in Bailey's Lemma  
  $\rho_1=-1$, divide on both sides by $2(1-\rho_2)$, and then let $\rho_2\to 1$. 
On the ``$\beta$-side''  we then have
$$
\sum_{n=1}^{\infty} 
 (-1)^n 
 \frac{(-q)_{n-1}(q)_{n-1}^2}{(q)_{2n-1}}  
 = f_6(q).
$$
The ``$\alpha$-side'' equals  
$$
\sum_{n=1}^{\infty}
\frac{q^{2n}}{1-q^{4n}}\mathcal{A}_{2n}  
- \sum_{n \geq 0} \frac{q^{2n+1}}{1-q^{4n+2}}\mathcal{A}_{2n+1}.
$$
Inserting (\ref{Aeven}) and (\ref{Aodd}) gives  Proposition \ref{Hecke6}.
\end{proof}
From Proposition \ref{Hecke6} we have 
$$
q^{-1}f_6(q^4) = - \sum_{\substack{n\geq 0\\ |j| \leq n}}  q^{3(2n+1)^2 - (2j)^2} - \sum_{\substack{n\geq 0\\ -n\leq j\leq n-1}}  q^{3(2n)^2 - (2j+1)^2}.
$$
From this we immediately conclude the theorem as before.  
\end{proof}

\begin{proof}[Proof of Theorem \ref{3case}] 
\begin{proposition} \label{Hecke7}
We have 
$$
f_7(q)= \sum_{\substack{n \geq 0\\   |j| \leq n  }} (-1)^{n+j} 
q^{3n^2+2n-j^2}\left( 1-q^{2n+1}\right) .
$$
\end{proposition}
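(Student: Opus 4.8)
```latex
\begin{proof}[Proof proposal for Proposition \ref{Hecke7}]
The plan is to mimic the strategy used for $f_1,\dots,f_6$: locate a suitable known Bailey pair, feed it into the limiting form of Bailey's Lemma (Lemma \ref{BaileyLemma}) with a judicious choice of the specializing parameters $\rho_1,\rho_2,a$, identify the resulting ``$\beta$-side'' with (a normalization of) $f_7(q)$, and then evaluate the ``$\alpha$-side'' to obtain the claimed Hecke-type double sum. Since $f_7$ involves $(q^2;q^2)_n$ in the numerator and $(-q)_{2n+1}$ in the denominator, the natural candidate is again the Bailey pair of Theorem \ref{AndrewBailey} (Theorem 2.3 of \cite{AH}) after the base change $q\to q^2$, with parameters chosen so that $(bq,cq)_n$ in $B_n'$ produces $(-q)_{2n+1}$; a promising choice is $b=-q$, $c=-q^2$ (or $b=q$, $c=-q^2$) together with $a=q^2$ and, in Bailey's Lemma, $\rho_1=q$, $\rho_2\to\infty$ so that the limit factor $\lim_{\rho\to\infty}(\rho)_n/\rho^n=(-1)^nq^{n(n+1)/2}$ supplies the $(-1)^n q^{n^2+n}$ appearing in the summand of $f_7$. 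One then checks that the prefactors $(aq,aq/(\rho_1\rho_2))_\infty/(aq/\rho_1,aq/\rho_2)_\infty$ collapse to a single rational factor like $1/(1+q)$, matching the normalization seen in the $f_1$ through $f_6$ computations.

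After the $\beta$-side is pinned down, the main work is the $\alpha$-side. Plugging $A_n'$ (with the chosen parameters) into the $\alpha$-side of Bailey's Lemma gives, after the $\rho_2\to\infty$ limit, a sum of the shape $\sum_{n\ge 0}(-1)^n q^{3n^2+\text{(linear)}}\bigl(1+\sum_{j=1}^n c_j q^{-j^2}(\cdots)\bigr)$; the inner finite sum inside $A_n'$ is an alternating $q$-series in $j$ which, exactly as in the proofs of Propositions \ref{Hecke4}, \ref{Hecke5}, and \ref{Hecke6}, telescopes so that the ``diagonal'' $j$-sum extends symmetrically to $|j|\le n$ and the two boundary terms reassemble into the factor $1-q^{2n+1}$. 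I would carry this out by writing $1-aq^{2n+1}$ or $1-q^{2n+1}$ as a difference designed to split the sum into two copies of a single telescoping series (the same ``$1-q^{2n+1}=(1-q^{n+j+1})+q^{n+j+1}(1-q^{n-j})$''-type trick used for $\alpha_n$ in the proof of Theorem \ref{BaileyTheorem}(2)), thereby converting the one-sided $j$-sum $\sum_{j=0}^n$ into the symmetric $\sum_{|j|\le n}$ and producing the factor $(1-q^{2n+1})$ explicitly.

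The step I expect to be the real obstacle is exactly this recombination/telescoping on the $\alpha$-side together with getting all the exponents to match $3n^2+2n-j^2$ and all the signs to come out as $(-1)^{n+j}$. In particular one must track carefully: (i) the sign $(-1)^n$ coming from the $\rho\to\infty$ limit against the internal $(-1)^j$ of $A_n'$, so that the product is genuinely $(-1)^{n+j}$ and survives the extension to negative $j$; (ii) the quadratic exponent bookkeeping, since $A_n'$ contributes a $q^{n^2}$ (which becomes $q^{2n^2}$ after $q\to q^2$) and Bailey's Lemma contributes a further $q^{n^2+n}$ from the $\rho_2\to\infty$ limit, adding up to the required $q^{3n^2}$ with the linear term $2n$ fixed by the remaining $(bc)^n$ and $(aq/(\rho_1\rho_2))^n$ factors; and (iii) confirming the $\infty$-products in the Bailey-Lemma prefactor truly telescope to the single rational factor, with no leftover infinite product. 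If the first parameter choice does not produce the clean $1-q^{2n+1}$ factor, the fallback is to instead start from one of Andrews--Hickerson's or Lovejoy's listed Bailey pairs relative to $a=1$ (as was done for $f_6$ via Lemma 12 of \cite{An}) with $\rho_1=-1$ or $\rho_1=-q$, dividing by an appropriate factor of $2$ or $1-\rho_2$ before letting $\rho_2\to 1$; the even/odd split $\alpha_{2n},\alpha_{2n+1}$ of such a pair typically yields precisely the two-term structure with the $1-q^{2n+1}$-type factor. Once Proposition \ref{Hecke7} is established, the passage to \eqref{3caseeqn} is the now-routine substitution $q\mapsto q^3$, completing the square to rewrite $3(3n^2+2n-j^2)+1$ as a norm form $u^2-3v^2$ via Lemma 3 of \cite{ADH}, checking that the congruence $\Nrm(\mathbf{a})\equiv 1\pmod 3$ and the sign $(-1)^{\Nrm(\mathbf{a})}$ are matched by the parametrization, and dividing out the relevant ambiguous prime.
\end{proof}
```
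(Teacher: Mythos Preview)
Your overall strategy matches the paper exactly: apply the Andrews--Hickerson Bailey pair of Theorem \ref{AndrewBailey} after $q\mapsto q^2$ with $a=q^2$, and use Bailey's Lemma with one $\rho$ sent to infinity. However, your specific parameter guesses are off in ways that would prevent the $\beta$-side from collapsing to $f_7$. With $b=-q$, $c=-q^2$ (base $q^2$) one gets $(bq^2,cq^2;q^2)_n=(-q^3,-q^4;q^2)_n$, which combines to $(-q)_{2n+2}/((1+q)(1+q^2))$, not the required $(-q)_{2n+1}$; and taking $\rho_1=q$ produces $(q;q^2)_n$ rather than the $(q^2;q^2)_n$ in the numerator of $f_7$. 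The paper's correct choice is $b=-1$, $c=-q$, $\rho_1=q^2$, $\rho_2\to\infty$: then $(bq^2,cq^2;q^2)_n=(-q^2,-q^3;q^2)_n=(-q)_{2n+1}/(1+q)$ and $(\rho_1;q^2)_n=(q^2;q^2)_n$, so the $\beta$-side is exactly $f_7(q)/(1-q)$ (the prefactor is $1/(1-q)$, not $1/(1+q)$). The $\alpha$-side then equals the claimed Hecke sum divided by $1-q$ with no further telescoping tricks required; the $1-q^{2n+1}$ factor comes straight out of the $(1-aq^{2n})$ in $A_n'$ combined with the Bailey-Lemma weight.

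One further small correction: for the passage from Proposition \ref{Hecke7} to \eqref{3caseeqn} there is no ambiguous prime to divide out (unlike the $f_5$ and $f_8$ cases). After $q\mapsto q^3$ and completing the square you get $(3n+1)^2-3j^2$ and $(3n+2)^2-3j^2$ directly, and the sign $(-1)^{n+j}$ already equals $-(-1)^{\Nrm(\mathbf{a})}$ in each case.
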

\begin{proof}
We use Theorem \ref{AndrewBailey} with $q \mapsto q^2, b=-1, c=-q,$ and $a=q^2$ in Bailey's Lemma with  $ \rho_1=q^2$ and $\rho_2 \to \infty$.
The ``$\beta$-side''  is 
$$
\frac{1}{1-q^2} \sum_{n=0}^{\infty}\frac{\left( q^2;q^2\right)_n (-1)^n q^{n(n+1)}}{\left(-q^2,-q^3;q^2 \right)_n} =\frac{1}{1-q} \sum_{n=0}^{\infty}\frac{\left( q^2;q^2\right)_n (-1)^n q^{n(n+1)}}{(-q)_{2n+1}} =\frac{f_7(q)}{1-q}.
$$
The ``$\alpha$-side''  equals the right hand side of Proposition \ref{Hecke7} divided by $1-q$, giving Proposition \ref{Hecke7}.
\end{proof}
From Proposition \ref{Hecke7} we immediately obtain
\begin{equation} \label{Hecke7new}
q f_7(q^3) =  \sum_{\substack{n \geq 0\\   |j| \leq n  }} (-1)^{n+j} \left(
q^{(3n+1)^2-3 j^2} - q^{(3n+2)^2 -3j^2}\right).
\end{equation}
We use Lemma 3 of \cite{ADH} to 
write $\textbf{a}= (u+\sqrt{3}v)$ with $\Nrm(\textbf{a})=u^2-3v^2$, $u>0$, $-\frac{u}{3}<v \leq \frac{u}{3}$. The condition $\Nrm(\textbf{a}) \equiv 1 \pmod 3$ translates into the two summands of (\ref{Hecke7new}), noting that  
$$
(-1)^{n+j}=-(-1)^{(3n+1)^2-3j^2} = (-1)^{(3n+2)^2 -3j^2}.
$$
\end{proof}

\begin{proof}[Proof of Theorem \ref{3mod3compthm}]
\begin{proposition}\label{Hecke8}
We have 
$$
f_8(q)=-\sum_{\substack{n\geq 1\\ -n\leq j\leq n-1}}q^{6n^2-2n-2j^2-2j}(1+q^{4n}) + \sum_{\substack{n\geq 0\\ |j|\leq n}} q^{6n^2+4n+1-2j^2}(1+q^{4n+2}).
$$
\end{proposition}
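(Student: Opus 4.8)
\textbf{Proof proposal for Proposition \ref{Hecke8}.}
The strategy mirrors the proofs of Propositions \ref{Hecke6} and, closely, \ref{Hecke2}: produce a Bailey pair whose $\beta$-side collapses to $f_8$ and whose $\alpha$-side, once the internal sums $U_n$ (or their analogues) are substituted via Proposition \ref{Un}, is exactly the stated Hecke-type double sum. First I would look for a Bailey pair relative to $a=1$ whose $\beta_n$ matches the summand of $f_8$ up to the standard rewriting $(q)_{2n-1}/(q^{2n};q^2)_n = (q)_{2n-1}(q)_{n}/((q)_{2n}\cdots)$; concretely, note $(q^{2n};q^2)_n=(q^{2n};q^2)_n$ and use $(q^2;q^2)_n=(q)_n(-q)_n$ together with $(q^{2n};q^2)_n(q^2;q^2)_{n-1}=(q^2;q^2)_{2n-1}$ to turn the summand $\tfrac{(q)_{2n-1}}{(q^{2n};q^2)_n}q^n$ into the form $\tfrac{(q^2;q^2)_{n-1}(q)_{2n-1}}{(q^2;q^2)_{2n-1}}q^n$, which should be recognizable as a $\beta$-side arising from one of the pairs in Theorem \ref{BaileyTheorem} (likely part (1), whose $b_n=(-1)^n(q;q^2)_{n-1}/(q)_{2n-1}$) after applying Bailey's Lemma with a suitable specialization of $\rho_1,\rho_2$ — presumably $\rho_1\to\infty$ and $\rho_2$ chosen ($\rho_2=-1$ or $\rho_2\to 1$) so that the extra $q$-Pochhammer factors conspire to give $(q^2;q^2)_{n-1}$ in the numerator. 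The comparison of powers of $q$ — the $f_8$-summand carries $q^{\frac{n(n+1)}{2}}$-type growth in the $a=1$ setup but the target has $q^{6n^2+\cdots}$ after a $q\mapsto q^{?}$ substitution — suggests, as in Proposition \ref{Hecke2}, that the correct move is to run Bailey's Lemma at base $q$ (no substitution), extract the Hecke sum in the variable $q$, and only afterwards replace $q\mapsto q^3$ in the proof of the Theorem; so Proposition \ref{Hecke8} itself should be provable purely at base $q$.

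The key steps, in order, would be: (i) algebraically simplify the summand of $f_8$ to identify the governing Bailey pair and the specialization of $\rho_1,\rho_2$ in Lemma \ref{BaileyLemma} — I expect $\rho_1\to\infty$ plus a limiting $\rho_2$, exactly paralleling the $f_2$ proof; (ii) write the resulting $\alpha$-side as $\sum_{n\ge 0} c_n^{\mathrm{even}}\,a_{2n} + \sum_{n\ge 0} c_n^{\mathrm{odd}}\,a_{2n+1}$ (or the $\mathcal{A}$-version), reading off the prefactors $c_n$ from the specialization and from $\lim_{\rho\to\infty}(\rho)_n/\rho^n=(-1)^nq^{n(n+1)/2}$; (iii) insert $U_{2n}=q^{2n^2-2n}\sum_{-n\le j\le n-1}q^{-2j^2-2j}$ and $U_{2n+1}=-q^{2n^2}\sum_{|j|\le n}q^{-2j^2}$ from Proposition \ref{Un} and collect exponents to land on $-\sum_{n\ge 1,\,-n\le j\le n-1}q^{6n^2-2n-2j^2-2j}(1+q^{4n})+\sum_{n\ge 0,\,|j|\le n}q^{6n^2+4n+1-2j^2}(1+q^{4n+2})$. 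The $(1+q^{4n})$ and $(1+q^{4n+2})$ binomials are the telltale sign that the $\rho_2\to 1$ (or $\rho_2=-1$) limit is being taken after dividing by $1-\rho_2$ (resp. $1+\rho_2$), just as $(1+q^{2n})$ and $(1+q^{2n+1})$ appeared in Proposition \ref{Hecke2}; matching these binomials is a good consistency check on the choice of parameters.

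The main obstacle I anticipate is step (i): correctly pinning down which of the two new Bailey pairs (or possibly the pair from Lemma 12 of \cite{An} used for $f_6$, or Theorem \ref{AndrewBailey}) produces precisely $\tfrac{(q)_{2n-1}}{(q^{2n};q^2)_n}q^n$ on the $\beta$-side, and with which $\rho_1,\rho_2$ limits. The coefficient $6n^2$ in the target (versus $4n^2$ for $f_1,f_2$ and $3n^2$ for $f_6$) indicates the Hecke sum here is ``denser'' — it has the shape of the $f_1/f_2$ family (factor $4$ becomes $6$ only through the later $q\mapsto q^3$ step giving $12n^2$-type exponents, or directly from a doubling) — so I would expect the relevant pair to be Theorem \ref{BaileyTheorem}(1) with $\rho_1\to\infty$ and a limiting $\rho_2$, yielding $f_8$ at base $q$ with the stated exponents; the bookkeeping to separate even/odd $n$ and to verify that no spurious terms survive at $n=0$ (the proposition starts the first sum at $n=1$) is routine but error-prone. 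Once the $\alpha$/$\beta$ identification is fixed, the remainder is the same mechanical substitution-and-collect that finishes Propositions \ref{Hecke1}, \ref{Hecke2}, and \ref{Hecke6}.
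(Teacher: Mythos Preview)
Your general architecture is right (Bailey pair $\to$ Bailey's Lemma $\to$ insert the closed forms for the inner sums), but the specific identification in step~(i) is off, and this is not just a bookkeeping issue: with your choices the exponents cannot come out to $6n^2$. The paper does \emph{not} use either of the new pairs from Theorem~\ref{BaileyTheorem}; it reuses the Andrews pair $(\mathcal{A}_n,\mathcal{B}_n)$ from Lemma~12 of \cite{An} (the same one that drove $f_6$), but at base $q^2$, i.e.\ it works with $\mathcal{A}_n(q^2),\mathcal{B}_n(q^2)$. In Bailey's Lemma one then takes $\rho_1=q$ (finite, not $\rho_1\to\infty$), divides by $1-\rho_2$, and sends $\rho_2\to 1$. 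On the $\beta$-side this produces
\[
\sum_{n\ge 1}\frac{(q^2;q^2)_{n-1}(q;q^2)_n}{(q^{2n};q^2)_n}\,q^n
=\sum_{n\ge 1}\frac{(q)_{2n-1}}{(q^{2n};q^2)_n}\,q^n=f_8(q),
\]
and on the $\alpha$-side one gets $\sum_{n\ge 1}\frac{q^n}{1-q^{2n}}\mathcal{A}_n(q^2)$, which after inserting (\ref{Aeven})--(\ref{Aodd}) (with $q\mapsto q^2$) yields exactly the stated double sum.

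The reason your guess fails is visible in the quadratic exponent. With Theorem~\ref{BaileyTheorem}(1) and $\rho_1\to\infty$, the $\alpha$-side prefactor contributes $q^{n(n+1)/2}$, so at even index $2n$ you pick up $q^{2n^2+n}$; combined with the $q^{2n^2-2n}$ inside $a_{2n}$ this gives a leading $q^{4n^2+\cdots}$, never $q^{6n^2+\cdots}$. The $6n^2$ is the fingerprint of the Andrews pair (leading $q^{3n^2}$ in $\mathcal{A}_{2n}$) run at base $q^2$. Likewise, the factors $(1+q^{4n})$ and $(1+q^{4n+2})$ arise because $\mathcal{A}_n(q^2)$ carries $(1-q^{8n})$ and $(1-q^{8n+4})$, which cancel against the $1-q^{2n}$ in the denominator of $\frac{q^n}{1-q^{2n}}$; they are not produced by the $\rho_2\to 1$ limit in the way $(1+q^{2n})$ appeared for $f_2$. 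So the missing ingredient is precisely the $q\mapsto q^2$ move on the $f_6$ pair together with the finite specialization $\rho_1=q$.
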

\begin{proof}
We use the Bailey pair $\mathcal{B}_n\left(q^2 \right)$ and $\mathcal{A}_n\left( q^2\right)$  
from the proof of Theorem \ref{3mod4compthm}.  
We then let in Bailey's Lemma $\rho_1=q$, divide both sides by $1-\rho_2$ and take $\rho_2\to 1$. 
The ``$\beta$-side'' gives 
$$
\sum_{n =1}^{\infty} 
 \frac{(q^2;q^2)_{n-1}(q;q^2)_{n}}{(q^{2n};q^2)_n} q^n = \sum_{n=1}^{\infty} 
  \frac{(q)_{2n-1}}{(q^{2n};q^2)_n} q^n = f_8(q).
$$
On the ``$\alpha$-side''  we have 
$$
\sum_{n\geq 1} \frac{q^n}{1-q^{2n}}\mathcal{A}_n(q^2).
$$
Proposition \ref{Hecke8} then follows after using (\ref{Aeven}) and (\ref{Aodd}) to evaluate $\mathcal{A}_n(q^2)$.

\end{proof}
By Proposition \ref{Hecke8}, $q^{-2}f_8(q^6)$ equals
$$
- \sum_{\substack{n\geq 0\\ -n-1\leq j\leq n}} \left(q^{(6n+7)^2-3(2j+1)^2}+q^{(6n+5)^2-3(2j+1)^2}\right) + \sum_{\substack{n\geq 0\\ |j|\leq n}} \left(q^{(6n+2)^2-3(2j)^2}+q^{(6n+4)^2-3(2j)^2}\right).
$$
As before we see that 
\begin{equation}
q^{-2}f_8(q^6)=
\sum_{\substack{  \textbf{a} \subset O_L \\ \mathcal{N}(\textbf{a})\equiv -2\pmod{6}   }} 
(-1)^{\frac{\Nrm(\textbf{a})}{2}} q^{\Nrm(\textbf{a})}.
\end{equation}   Theorem \ref{3mod3compthm} then follows by dividing by the unique ideal $(1+\sqrt{3})$ of $O_L$ of norm $2$.
\end{proof}
\subsection{Proof of the  Corollories} \label{CorSection}
 Corollary  \ref{LacCorollary} follows immediately from the Hecke-type sums obtained in Section \ref{ProofSection}, using Theorem 1 of \cite{Od}.  
 
 To prove Corollary \ref{InfCorollary}, we 
  note that in all cases the weightings  only depended on the norm of $\textbf{a}$, and hence the $n$-th coefficient in each case equals the number of ideals of norm $n$ times a weighting depending only on $n$.  We next recall 
that unique factorization gives the number of ideals of $O_K$ and $O_L$ of norm $p^{\ell}$ as follows:
\begin{eqnarray*}
\#\{ \textbf{a}\subset O_K: \Nrm(\textbf{a})=p^{\ell}\} &=& 
\left\{ 
\begin{array}{ll}
\ell+1& \text{if } p \equiv  \pm 1 \pmod 8,\\
1& \text{if } p \equiv  \pm 3 \pmod 8 \text{ and } \ell  \text{ is even },\\
1& \text{if }  p=2,\\
0& \text{if } p \equiv  \pm 3 \pmod 8 \text{ and } \ell  \text{ is odd },\\
\end{array}
\right. \\ 
\#\{ \textbf{a}\subset O_L: \Nrm(\textbf{a})=p^{\ell}\}
&=& 
\left\{ 
\begin{array}{ll}
\ell+1& \text{if } p \equiv  \pm 1 \pmod{12},\\
1& \text{if } p \equiv  \pm 5  \pmod{12} \text{ and } \ell  \text{ is even },   \\
1& \text{if }  p=2,3,\\
 0& \text{if } p \equiv \pm 5 \pmod{12}  \text{ and } \ell  \text{ is odd }.\\
\end{array}
\right. 
\end{eqnarray*}
Multiplicativity then gives an exact formula for the number of elements of norm $n$, based on the factorization. The proof of  Corollary \ref{InfCorollary}  now follows easily using the explicit form of the weighting.   

\section{Combinatorial Interpretations}\label{combinatorics}

\subsection{Theorem \ref{16Theorem}} \label{combinatorics1}
 We rewrite
 $$
 f_1(q) =\sum_{n=0}^{\infty} \frac{q^{ \frac{n^2+n}{2 }} }{(-q)_n \left(1-q^{2n+1} \right)}.
 $$
For $n \geq 1$ we   form a triangle with sides of length $n$ in the upper left hand corner of the Ferrer's diagram.  We then append $2k$ to the first $n-1$ parts and then add a part of size $2k$ and $k$ parts of size $1$, where $k$ is the power of $q^{2n+1}$ arising from $\frac{1}{1-q^{2n+1}}$.  Finally, $\frac{1}{(-q)_n}$ adds parts of size at most $n$, which we place along the ``diagonal''.  Note that $n$ counts the number of non-repeated parts and therefore our construction yields a bijection 
to the partitions in $P_1$ with at least one nonrepeated part. 
 The case $n=0$ corresponds to partitions containing only ones.  Subtracting the term $q$ removes the unique partition of $1$, which was already counted above.  From the construction it is clear that the weighting is given by the parity of $r_1(\lambda)$.  
\subsection{Theorem \ref{16Companion}} \label{combinatorics2}
We rewrite 
$$
f_2(q) = \sum_{n\geq 1} \frac{q^{\frac{n^2+n}{2}}}{(-q)_{n-1} (1-q^{2n-1})}.
$$
We begin by forming a triangle with sides of length $n$.  We then append $2k$ to the largest $n-1$ parts and append $k$ to the smallest part, where $k$ is the power of $q^{2n-1}$ coming from $\frac{1}{1-q^{2n-1}}$.  The smallest part is now $k+1$, the rank is $2k$, and the second smallest part is $2k+2$.  Finally, $\frac{1}{(-q)_{n-1}}$ adds parts of size at most $n-1$, which we place along the diagonal.  The size of the smallest part is not changed in the final step, so our construction yields a bijection to $\lambda\in P_2$ with $n$ parts and smallest part $k+1$, having the desired weighting.

\subsection{Theorem \ref{2twistthm}} \label{combinatorics3}
To see that $f_3(q)$ generates the desired partitions, we rewrite 
$$
f_3(q)=
\sum_{n= 0}^{\infty} 
 \frac{(q)_{n}(q^{n+1})_n }{(-q)_n (-q^{n+1})_{n+1}} q^n.
$$
The factor $\frac{(q)_n}{(-q)_n} q^n$ gives an overpartition  
(corresponding to the first  component $\mu$ of the overpartition pair $(\mu,\lambda)$)
 with largest part $n$ occuring at least once non-overlined, weighted by the number of parts of $\mu$ minus $1$.  
 We next consider the second component $\lambda$ of the overpartition pair, and  
  add   $k$  parts of size $n+1$ and $k$  parts of size $r$ (when $r>0$), 
 where $k$ is the power of $q^{n+1+r}$ coming from $\frac{(q^{n+1})_n}{(-q^{n+1})_{n+1}}$.  If a part of size $r>0$ comes from the numerator, then we overline the first occurrence of $r$ and if the term with $r=0$ is chosen from the numerator, then we overline the first occurrence of $n+1$.  
 Since the parts for $r>0$ come in pairs with one of the parts of size $n+1$, we get $\Lambda \in P_3$ and the weighting is clearly given as desired. 

\subsection{Theorem \ref{2mod1thm}} \label{combinatorics4}
To see that $f_4(q)$ generates the desired partitions, we proceed similarly as for $f_3$ and rewrite
 $$
 f_4(q)=
\sum_{n=1}^{\infty} 
 \frac{(q)_n}{(-q)_n}q^{n}\cdot \frac{(q^{n+1})_{n-1}}{(-q^{n+1})_{n}}.
$$
The factor $\frac{(q)_n}{(-q)_n} q^n$ gives an overpartition (corresponding to the component $\mu$)
 with largest part $n$ occurring at least once non-overlined, weighted by the number of parts minus 1.  
 We next consider the component $\lambda$. 
 We add $k$ parts of size $n$ and $k$ parts of size $r$, where $k$ is the power of $q^{n+r}$ coming from $\frac{(q^{n+1})_{n-1}}{(-q^{n+1})_{n}}$.  If a part of size $r$ comes from the numerator, then we overline the first occurrence of $r$.
 From this one easily sees that $f_4$ enumerates the claimed partitions. 

\subsection{Theorem \ref{3mod4thm}} \label{combinatorics5}
Recall that  
$\ell_i=\lambda_i-\lambda_{i+1}$ ($1\leq i \leq n-1$) and  $\ell_n=\lambda_n$,  
 $E:=E_{\lambda}:=\{ 2 \leq  r \leq n: \ell_r\text{ is even}\}$ and $e:=\# E$.   
 The second largest part of $\lambda$ (if it exists) must be at least $n-1 + e$ because the size of the second largest part is $\lambda_2=\sum_{i=2}^{n} \ell_i$. 
  Thus, it is natural to define 
$$
d_{\lambda,1}:=\lambda_2- \left( (n-1)+e\right).
$$
Since 
$$
d_{\lambda,1} = \left(\sum_{i=2}^{n} \ell_i\right) - (n-1+e) = \sum_{i\in E} (\ell_i -2)  + \sum_{i\notin E, i>1} (\ell_i -1)
$$
is even, we may furthermore define the integral metric $d_{\lambda}:=\frac{d_{\lambda,1}}{2}$ alluded to in the introduction.

We now rewrite 
$$
f_5(q)= \frac{1+q^2}{1-q^3} + \sum_{n=2}^{\infty} 
 (-1)^n \frac{(q^2)_{n-1}}{(q^3;q^2)_{n}} q^{\frac{n^2+n}{2}}.
$$
The term
$$
\frac{1+q^2}{1-q^3}=\sum_{j=0}^{\infty}  \left(q^{3j} + q^{3j+2}\right)
$$
corresponds  to partitions in $P_5$ with exactly one part which is not congruent to $1$ modulo $3$.

For $n\geq 2$, we first form a triangle with sides of length $n$ weighted by the largest part.  We adjoin $3k_r$ to the largest part and $2k_r$  to the next $r-1$ parts, where $k_r$ is the power of $q^{2r+1}$ coming from $\frac{1}{(q^3;q^2)_{n}}$.   At this stage of the construction we have added an even number to each part other than the largest, so $\ell_i$ is odd for $i>1$ and hence $E=\emptyset$.  Notice that $d_{\lambda}=\sum_{r=2}^{n} k_r$ and $\ell_1= 3k_1 + d_{\lambda}+1$.  Hence we have thusfar constructed all $\lambda\in P_5$ with $E=\emptyset$, $n$ parts, and the desired weighting.

Finally we add $1$ to the first $r$ parts if $q^{r}$ is chosen from $(q^2)_{n-1}$.
Notice that $n$ represents the number of parts and $r\in E$ if and only  if 
we chose $q^r$ in the final step.  
Moreover  the overall weighting is 
$$
(-1)^{n+e} = (-1)^{n+e+2d_{\lambda}} =- (-1)^{\lambda_2}.
$$

\subsection{Theorem \ref{3mod4compthm}} \label{combinatorics6}

We first rewrite
$$
f_6(q) = \sum_{n=1}^{\infty} 
 (-1)^n \frac{(q^2;q^2)_{n-1}}{(q^n)_{n}}q^n.
$$
We begin with a part of size $n$ from the factor $q^n$.  We next add $k$ parts of size $n$ and $k$ parts of size $r$ (if $r>0$), where $k$ is the power of $q^{n+r}$ coming from $\frac{1}{(q^n)_{n}}$.  We finally add two parts of size $r$ and overline the first occurrence of $r$ if $q^{2r}$ is chosen from $(q^2;q^2)_{n-1}$.  Since all of the parts other than the ones coming from  $\frac{1}{1-q^n}$ occur in pairs with one of the parts either overlined or equal to $n$ we obtain an overpartition in $P_6$ with the   correct weighting.

\subsection{Theorem \ref{3case}} \label{combinatorics7}

We begin by rewriting 
$$
f_7(q)  
= \frac{1}{1+q} + \sum_{n=1}^{\infty} 
(-1)^n\, \frac{q^{n^2+n}(q)_n}{(-q^{n+1})_{n+1} }.
$$
The term $\frac{1}{1+q}$ corresponds to partitions with all parts equal to one weighted by the number of parts.
For $n\geq 1$ we first place $n$ parts of size $n+1$.  We then append $k$ to the first $n$ parts and add $k$ parts of size $r$ weighted by $(-1)^k$, where $k$ is the power of $q^{n+r}$ chosen from $\frac{1}{(-q^{n+1})_{n+1}}$.  Finally, we add an overlined part of size $r$ weighted by $-1$ if $q^r$ is chosen from $(q)_n$.  Notice that   $n=M(\lambda)$.  Moreover, since the largest part is at least $2$ we have no overlap with the partitions coming from $\frac{1}{1+q}$.  
This easily yields the desired bijection to $P_7$ with the claimed weighting.

\subsection{Theorem \ref{3mod3compthm}} \label{combinatorics8}
We start with a part of size $n$.  For $q^r$ chosen from $(q)_{n-1}$ we add an overlined part of size $r$.  We finally add $k$ parts of size $n$ and $k$ parts of size $r$ (when $r>0$), where $k$ is the power of $q^{n+r}$ occurring from $\frac{1}{(-q^n)_n}$.   
This easily yields the desired bijection to $P_8$ with the claimed weighting.

\end{document}